\newcommand{\G}{\ensuremath{\mathbb{G}}}
\def\bs{\backslash}
\newcommand{\bbA}{\ensuremath{\mathbb{A}}}
\newcommand{\bbC}{\ensuremath{\mathbb{C}}}
\newcommand{\bbG}{\ensuremath{\mathbb{G}}}
\newcommand{\bbH}{\ensuremath{\mathbb{H}}}
\newcommand{\bbQ}{\ensuremath{\mathbb{Q}}}
\newcommand{\bbR}{\ensuremath{\mathbb{R}}}
\newcommand{\bbZ}{\ensuremath{\mathbb{Z}}}
\newcommand{\frg}{\ensuremath{\mathfrak{g}}}
\newcommand{\frh}{\ensuremath{\mathfrak{h}}}
\newcommand{\fra}{\ensuremath{\mathfrak{a}}}
\newcommand{\calO}{\ensuremath{\mathcal{O}}}
\DeclareMathOperator{\Id}{Id}
\DeclareMathOperator{\rk}{rk}
\DeclareMathOperator{\rg}{rg}
\DeclareMathOperator{\Isom}{Isom}
\DeclareMathOperator{\Aut}{Aut}
\DeclareMathOperator{\GL}{GL}
\DeclareMathOperator{\PGL}{PGL}
\DeclareMathOperator{\SL}{SL}
\DeclareMathOperator{\PSL}{PSL}
\DeclareMathOperator{\Sp}{Sp}
\DeclareMathOperator{\SO}{SO}
\DeclareMathOperator{\PO}{PO}
\DeclareMathOperator{\SU}{SU}
\DeclareMathOperator{\Gr}{Gr}
\DeclareMathOperator{\Comm}{Comm}
\DeclareMathOperator{\Gal}{Gal}
\DeclareMathOperator{\pr}{pr}
\DeclareMathOperator{\Ad}{Ad}
\DeclareMathOperator{\ad}{ad}
\def\bs{\backslash}
\newcommand{\natls}{{\mathbb N}}
\newcommand\FF{{\mathcal F}}
\newcommand\LL{{\mathcal L}}
\newcommand\MM{{\mathcal M}}
\newcommand\PP{{\mathcal P}}
\newcommand\PMF{{\PP\kern-2pt\MM\FF}}
\newcommand\PML{{\PP\kern-2pt\MM\LL}}
\newcommand\Z{{\mathbb Z}}
\newcommand\R{{\mathbb R}}
\newcommand\Q{{\mathbb Q}}
\newcommand{\fsubd}{\mathrel{{\scriptstyle\searrow}\kern-1ex^d\kern0.5ex}}
\newcommand{\bsubd}{\mathrel{{\scriptstyle\swarrow}\kern-1.6ex^d\kern0.8ex}}
\newcommand{\fsubeq}{\mathrel{\raise-.7ex\hbox{$\overset{\searrow}{=}$}}}
\newcommand{\bsubeq}{\mathrel{\raise-.7ex\hbox{$\overset{\swarrow}{=}$}}}
\newcommand{\tsh}[1]{\left\{\kern-.9ex\left\{#1\right\}\kern-.9ex\right\}}
\newtheorem{thm}{Theorem}[section]
\newtheorem{prop}[thm]{Proposition}
\newtheorem{lemma}[thm]{Lemma}
\newtheorem{cor}[thm]{Corollary}
\theoremstyle{definition}
\newtheorem{dfn}[thm]{Definition}
\newtheorem{rem}[thm]{Remark}
\newtheorem{problem}[thm]{Problem}
\newtheorem{conj}[thm]{Conjecture}
\newtheorem{question}[thm]{Question}
\newtheorem{hypothesis}[thm]{Hypothesis}
\newtheorem{sassume}[thm]{Standing assumptions}
\newtheorem{assume}[thm]{Assumptions}
\newtheorem{notation}[thm]{Notation}
\newsavebox{\commentbox}
\newenvironment{mycomment}%
{\ifthenelse{\equal{\showcomments}{yes}}%
{\footnotemark
        \begin{lrbox}{\commentbox}
        \begin{minipage}[t]{1.25in}\raggedright\sffamily\tiny
        \footnotemark[\arabic{footnote}]}
{\begin{lrbox}{\commentbox}}}
{\ifthenelse{\equal{\showcomments}{yes}}
{\end{minipage}\end{lrbox}\marginpar{\usebox{\commentbox}}}
{\end{lrbox}}}
\newcommand{\showcomments}{yes}
\begin{document}

\title[The Greenberg-Shalom Hypothesis]{Greenberg-Shalom's Commensurator Hypothesis and Applications}

\author[N. Brody]{Nic Brody}
\address{Department of Mathematics \\ University of California at Santa Cruz \\ Santa Cruz, CA, USA}

\author[D. Fisher]{David Fisher}
\address{Department of Mathematics \\ Rice University \\ Houston, TX, USA }

\author[M. Mj]{Mahan Mj}
\address{School of Mathematics \\ Tata Institute of Fundamental Research \\ Mumbai, India}

\author[W. van Limbeek]{Wouter van Limbeek}
\address{Department of Mathematics, Statistics, and Computer Science \\ University of Illinois at Chicago \\ Chicago, IL, USA}

\date{\today}

\begin{abstract}
We discuss many surprising implications of a positive answer to a question raised in some cases by Greenberg
in the {$1970$s} and  more generally by Shalom in the early $2000$s.  We refer to this positive answer as the Greenberg-Shalom hypothesis.  This hypothesis then says that any infinite discrete subgroup of a semisimple Lie group with dense commensurator is a lattice in a product of some factors. For some applications it is natural to extend the hypothesis to cover semisimple algebraic groups over other fields as well.
\end{abstract}

\maketitle

\setcounter{tocdepth}{1}
\numberwithin{equation}{section}
\tableofcontents

\section{Introduction}
\label{sec:intro}


Let $G$ be a real or $p$-adic semisimple Lie group with finite center and without compact factors, or a finite product of such groups.  More precisely, we consider $G=\G(k)$ where $k$ is a local field of characteristic zero and $\G$ is either a semisimple algebraic group defined over $k$ or { a product of semisimple algebraic groups over possibly different fields of this type.} Let $\Gamma\subseteq G$ be a discrete subgroup with commensurator $\Delta$. Borel proved that if $\Gamma$ is an arithmetic lattice, then the commensurator contains the rational points of $G$ and in particular is almost dense in $G$ \cite{borel-comm}. Here, `almost dense' means its closure has finite index in $G$. Greenberg (for $G=\SO(n,1)$) and Shalom (in general) asked whether almost dense commensurator of a discrete, Zariski dense subgroup $\Gamma$ detects its arithmeticity.  We formulate the positive answer to this question as a hypothesis, since here we are primarily interested in the implications if it is true.

\begin{hypothesis}[{Greenberg \cite{greenberg-comm}, Shalom, see \cite{llr-comm}}] Let $G$ be a semisimple Lie group with finite center and without compact factors. Suppose $\Gamma\subseteq G$ is a discrete, Zariski dense subgroup of $G$ whose commensurator $\Delta\subseteq G$ is almost dense. Then $\Gamma$ is an arithmetic lattice in $G$.
\label{q:shalom}\end{hypothesis}

We will refer to Hypothesis \ref{q:shalom} as the \emph{Greenberg-Shalom hypothesis}.  We remark here that the hypothesis seems quite plausible on first encounter, and Greenberg and Shalom both seem inclined to believe it. On the other hand, the many implications of the hypothesis discussed here may cast some doubt on the likelihood that it is correct. The purpose of this article is to exhibit these connections between the hypothesis and other problems, many of which do not a priori involve commensurators. Taken together they indicate a point of view from which one may at least view the problems as connected, whether one views it as evidence for or against the hypothesis or for or against any of the results the hypothesis implies.  A short overview of these applications is given below.

	\begin{enumerate}[(1)]
		\item[\eqref{sec:irred2latt}] A question of Benoist on existence of discrete, irreducible free or surface subgroups in products of simple real and $p$-adic Lie groups {(see Corollaries~\ref{noirreduciblesurfacesinpadics} and \ref{cor:algebraicsurfacegroups})}.
		\item[\eqref{sec:lukk}] A conjecture of Lyndon-Ullman and Kim-Koberda on groups generated by parabolics {(see Theorem~\ref{thm:lukk})}.
		\item[\eqref{sec:arith-3dim}] Existence of elements with integral traces in hyperbolic 3-manifold groups {(see Theorem~\ref{thm:3manifold})}.
		
		\item[\eqref{sec:cohere}] A question of Serre on coherence of $\SL(2,\bbZ[1/p])$ and related groups, and a problem of Wise whether coherence is geometric {(see Theorem~\ref{thm-cohere})}.
		\item[\eqref{sec:mz}] The Margulis-Zimmer conjecture on arithmeticity of commensurated subgroups of $S$-arithmetic lattices {(see Theorem~\ref{thm-mz})}.

	\end{enumerate}
	
	{We believe that one of the main sources of
motivation for  Shalom's question \cite{llr-comm} that led to Hypothesis~\ref{q:shalom}  is the Margulis-Zimmer Conjecture~\ref{conj:mz} mentioned in Item (5) above. The Margulis-Zimmer Conjecture \cite{shalom-willis} seeks to classify commensurated subgroups
of higher rank lattices.
In Remark~\ref{rmk:mz}(3), we shall spell out this connection.}

{
A key impetus for this paper  is a close connection between commensurators and irreducibility. While this idea has been commonplace
for lattices, it does not seem to have been noticed in this context before.   In section \ref{sec:assume} we lay out some key definitions for irreducibility and then
make the key connection in Lemma \ref{lem:intersectionprojection}.  In Proposition \ref{prop-irred2latt} this
is used to show that the Greenberg-Shalom hypothesis implies that many irreducible groups are automatically lattices.}

To close out this introduction, we discuss the quite limited progress on Greenberg-Shalom's question: First, Margulis proved that non-arithmetic lattices have discrete commensurators, thereby resolving Greenberg-Shalom's question for lattices. At roughly the same time and apparently unaware of Margulis' work, Greenberg proved Hypothesis \ref{q:shalom} for finitely generated subgroups of $G=\SL(2,\bbR)$ \cite{greenberg-comm}. Building on work by Leininger-Long-Reid \cite{llr-comm}, Mj proved Hypothesis \ref{q:shalom} for finitely generated subgroups of $\SL(2,\bbC)$ \cite{mj-comm}.  For all other cases of $G$, as well as general infinitely generated subgroups of the above, Hypothesis \ref{q:shalom} is open. Koberda-Mj proved the hypothesis for normal subgroups $\Gamma$ of arithmetic lattices in rank 1 with positive first Betti number \cite{koberda-mj,koberda-mj2}. Fisher-Mj-Van Limbeek proved the hypothesis for all normal subgroups of lattices \cite{f-mj-vl}.  Greenberg (for $\SO(n,1)$) and Mj (in general) also proved that any group
satisfying the conditions of the hypothesis has full limit set \cite{greenberg-comm, mj-comm}.

\subsection*{Acknowledgments} The second author thanks Yves Benoist for a very useful conversation in Zurich in January $2019$.  The authors
thank Michael Larsen and Sandeep Varma for useful conversations about local fields of characteristic $p$.  They also thank Simon Machado for
conversations about approximate subgroups and commensurated approximate subgroups as well as for reminding them of the example of Burger and Mozes.
  We also thank Marco Linton, Sam Mellick and Alan Reid for corrections to and comments on a preliminary version
of this paper. Finally the authors thank Alan Reid for encouragement.

David Fisher is supported by NSF DMS-2246556.  Mahan Mj is supported by the Department of Atomic Energy, Government of India, under project no.12-R\&D-TFR-5.01-0500;
and in part by a DST JC Bose Fellowship,  and an endowment from the Infosys Foundation. Wouter van Limbeek is supported by NSF DMS-2203867.

\section{Notation, standing assumptions, and definitions of irreducibility}\label{sec:assume}

In this section, we will fix some standing assumptions that will be in place for the rest of this article. Subsequent sections may have further assumptions that will be detailed at the beginning of each section.
\begin{sassume}\label{assumptions}\mbox{}
\begin{itemize}
	\item $I$ is a finite set indexing local fields $k_i, i \in I$, of characteristic zero.
	\item { $I$ also indexes a set of groups $\bbG_i$ which are} connected, absolutely simple, isotropic, adjoint algebraic groups over $k_i$. We set $G_i:=\bbG_i(k_i)$ and $G=\prod_{i\in I} G_i$. When not otherwise specified, we equip $G$ with the analytic topology.
\end{itemize}
\end{sassume}

\begin{rem} Much (but not all) of our discussion also applies when $k_i$ are local fields of positive characteristic, and the Greenberg-Shalom problem can be formulated, and is open and interesting, in that setting as well.\end{rem}

We now make two definitions:

\begin{dfn}
A subgroup $\Theta<G$ is \emph{almost dense} if its analytic closure has finite index in $G$.
\end{dfn}

\begin{dfn}
A subgroup $\Gamma<G$ will be called a \emph{standard commensurated subgroup} if it has unbounded projection to each
factor and almost dense commensurator $\Delta<G$.
\end{dfn}

\begin{notation} Let $G$ be as above. For $i\in I$, we denote by $\pr_i:G\to G_i$ the canonical projection. For $J\subseteq I$, we set $G_J:=\prod_{j\in J} G_j$, and $\pr_J: G\to G_J$ denotes the canonical projection.
\end{notation}

We introduce the following the following irreducibility assumption:

\begin{dfn}\label{dfn:s-irred} Let $G$ be as in the above standing assumptions and let $L\subseteq G$ be a subgroup. We say $L$ is \emph{strongly irreducible} if for every proper subset $J\subsetneq I$, the projection $\pr_J(L)\subseteq G_J$ is almost dense.\end{dfn}

From the above definition, it is not at all obvious that questions involving general discrete (or closed) groups can be reduced to strongly irreducible discrete (or closed) groups, i.e. if $\Theta\subseteq G$ is a discrete subgroup, it is not clear there exists $J\subseteq I$ such that $\pr_J(\Theta)\subseteq G_J$ is discrete and strongly irreducible. {  We now introduce a notion of \emph{irreducibility} for which reductions are easier.  In Appendix \ref{subsec:irr}, we will prove these are equivalent.}


To make our second notion of irreducibility precise we need to pass to the case where the only fields allowed
are $\bbQ_p$ and $\bbR$ and not finite extensions of those fields.  Many readers may already be assuming that and those that
are need not pay too much attention to the next two paragraphs on a first reading.

Let $G=\prod_i G_i$ be as in Standing Assumptions \ref{assumptions}, and let $\Gamma\subseteq G$ be a discrete subgroup. We want to define what it means for $\Gamma$ to be \emph{irreducible} in the product $G=\prod_i G_i$. There are two obvious ways one might reduce the study of $\Gamma$ to a simpler scenario, namely by passing to either a quotient of $G$ or a  subgroup of $G$. We can pass to a quotient precisely when the projection of $\Gamma$ to some proper subset of factors is discrete. To see when one may pass to a subgroup is more involved, and we need to introduce some notation.

We set $\bbQ_\infty:=\bbR$ and if $p$ is a finite prime or $\infty$, we denote by $I_p\subseteq I$ the subset of $\bbQ_p$-analytic factors of $G$ (so $I_\infty$ denotes the collection of archimedean factors), and we write $G_p:=\prod_{i\in I_p} G_i$ for their product. We restrict scalars on all factors to $\bbQ_p$, and define the $\bbQ_p$-algebraic group $R_p \bbG:= \prod_{i \in I_p} R_{k_i / \bbQ_p}  \bbG_i$. We write $R_p G:= (R_p \bbG)(\bbQ_p)$. Of course we have $R_p G \cong G_p$ as $\bbQ_p$-analytic groups, but $R_p G$ comes equipped with a Zariski topology as a $\bbQ_p$-algebraic group. We will refer to this as the $\bbQ_p$\emph{-Zariski topology} on $G_p$. Note that given a discrete group $\Gamma\subseteq G$, we could first project to any subproduct $G_J$ such that $\pr_J(\Gamma)$ is discrete, and then pass to the subgroup $H:=\prod_p \overline{\pr_{J_p}(\Gamma)}^{\bbQ_p}$, where the closure is taken with respect to the $\bbQ_p$-Zariski topology on $R_p G$. This motivates the following definition:

	\begin{dfn}\label{dfn:irred} A subgroup $\Theta$ is \emph{irreducible} in $G$ if for every proper subset of factors $J\subsetneq I$, the projection of $\Theta$ to $G_J=\prod_{j\in J} G_j$ is not discrete, and for every $p$ (a finite prime or $\infty$), the projection of $\Theta$ to $G_p$ is $\bbQ_p$-Zariski dense.
	\end{dfn}

Most irreducible groups we consider will be discrete, but we do not assume this for the purposes of the above definition.	
\begin{rem}\label{rmk:onefactorirred} If $G$ is simple, then irreducibility is equivalent to $\bbQ_p$-Zariski density. \end{rem}

{
\begin{lemma}
\label{rmk:irred->unbdd}
Any discrete irreducible subgroup $\Theta\subseteq G$ has unbounded projections to every simple factor $G_i$.  Moreover the set of elements projected
to any bounded open set in $G/G_i$ is already unbounded in $G_i$.
\end{lemma}

\begin{proof}
 If $G$ itself is simple, this follows from Zariski density of $\Theta$. If there is more than one factor, then for any factor $G_i$, the image of $\Theta$ in $G/G_i$ is indiscrete. For any bounded open set $U\subseteq G/G_i$, the set $\Theta_U$ of elements of $\Theta$ with image in $U$ is infinite and has discrete projection to $G_i$, and therefore is unbounded in $G_i$.\end{proof}}

{
To clarify the role of having two definitions, note that
irreducibility seems much easier to establish than strong irreducibility, since the former only requires $\bbQ_p$-Zariski density of projections to $R_p G$, whereas the latter requires almost density in the analytic topology of all subproduct projections. On the other hand, strong irreducibility is a much more powerful property to use in applications.  Since we can verify that the two notions are in fact equivalent, this apparent discrepancy is very useful.

\begin{prop}\label{prop:irred-vs-sirred}
A discrete subgroup is irreducible if and only if it is strongly irreducible. \end{prop}

This proposition will be proved in Appendix \ref{subsec:irr} where we will also prove the following Lemma which we use
frequently in applications and which provides the connection between irreducible groups and commensurators.

\begin{lemma}\label{lem:intersectionprojection}
Let $J$ index a collection of factors of $G$ such that the fields $k_i$ for $i \in J$ are all non-archimedean.
Let $K \subset \prod_J G_j$ be a maximal compact open.  Let $\Gamma_K = \Gamma \cap (G_{J^c} \times K)$.
Then $\pr_{J_c}(\Gamma_{K})$ is a discrete irreducible subgroup of ${G_{J^c}}$ with dense commensurator
$\pr_{J_c}(\Gamma)$.
\end{lemma}

We give here the proof for two factors one of which is a connected real Lie group, the other $p$-adic.  This is short, simple, and illustrates the key connection.
The more involved proof with many factors will be given in Appendix \ref{subsec:irr}.

\begin{proof}[Proof for two factors]
Let $G= G_1 \times G_2$ and assume $G_2$ is defined over a non-archimedean field and $G_1$ is a connected real Lie group.
By van Dantzig's theorem,  $G_2$ contains a compact open subgroup $K_2$ which is commensurated by $G_2$, see Proposition \ref{compactopencomm}. That proposition also
implies that $\Gamma_K=\Gamma \cap (G_1 \times K_2)$ is commensurated by $\Gamma$ simply by looking at the projection
to $G_2$.  Now $\Gamma_K$ projects discretely to $G_1$ since it is discrete in $G_1 \times K_2$ and the fiber
of the projection $G_1 \times K_2 \rightarrow G_1$ is compact. Now $\pr_1(\Gamma_K)$ is a discrete group
that is unbounded by Lemma \ref{rmk:irred->unbdd} and has almost dense commensurator $\pr_1(\Gamma)$. This
implies that the Lie algebra of the Zariski closure of $\pr_1(\Gamma_K)$ is invariant under $G$ and hence equals
 $G$, i.e. $\pr_1(\Gamma_K)$ is  Zariski dense, which is all we needed to show.
\end{proof}

}

\section{Irreducible groups are lattices}

\subsection{Irreducible groups in products of semisimple Lie groups} \label{sec:irred2latt} We can now give the first application of the Greenberg-Shalom hypothesis. Aside from its intrinsic interest, this result will provide an important connection to other applications as well.

\begin{prop}\label{prop-irred2latt} Assume the Greenberg-Shalom Hypothesis \ref{q:shalom}. Let $G$ be as in Standing assumptions \ref{assumptions} with at least two factors, and assume at least one factor is nonarchimedean. Then any discrete and irreducible subgroup $\Gamma\subseteq G$ is an irreducible lattice.\end{prop}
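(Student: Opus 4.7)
The plan is to exploit the nonarchimedean factor $G_n$ by passing from $\Gamma$ to a discrete subgroup of a lower-rank product satisfying the hypotheses of Greenberg--Shalom, applying the hypothesis there, and then bootstrapping via a covolume calculation. Write $J := I \setminus \{n\}$. I first verify that both $\pr_J|_\Gamma$ and $\pr_n|_\Gamma$ are injective: the kernel $\Gamma \cap G_J$ of $\pr_n|_\Gamma$ is a discrete subgroup of $G_J$ normalized by the almost dense $\pr_J(\Gamma)$ (Proposition \ref{prop:irred-vs-sirred}), hence centralized by $G_J^+$, hence trivial since $G_J$ is adjoint; the symmetric argument handles $\pr_J|_\Gamma$.

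Next pick a compact open $K \subseteq G_n$ and set $\Lambda_K := \Gamma \cap \pr_n^{-1}(K)$. Since $K$ is compact and $\Gamma$ discrete in $G$, the image $\pr_J(\Lambda_K)$ is discrete in $G_J$; it is infinite because almost-density of $\pr_n(\Gamma)$ together with openness of $K$ makes $\pr_n(\Gamma) \cap K$ dense in $K$, and $\pr_n$ is injective on $\Gamma$. Since any two compact open subgroups of $G_n$ are commensurable, for $\gamma = (\gamma', \gamma'') \in \Gamma$ we have $\gamma \Lambda_K \gamma^{-1} = \Lambda_{\gamma''K\gamma''^{-1}}$, commensurable with $\Lambda_K$ via the common finite-index subgroup $\Lambda_{K \cap \gamma''K\gamma''^{-1}}$. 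Projecting, $\pr_J(\Gamma)$ commensurates $\pr_J(\Lambda_K)$ in $G_J$. Invoking Lemma \ref{lemma:zdense-comm}, there is a subset $J' \subseteq J$ with $\pr_J(\Lambda_K) \subseteq G_{J'}$ Zariski-dense, and the commensurator of $\pr_J(\Lambda_K)$ inside $G_{J'}$ (which equals the $\pr_{J'}$-image of its commensurator in $G_J$, since $G_{J \setminus J'}$ acts trivially by conjugation on $G_{J'}$) contains the almost dense $\pr_{J'}(\Gamma)$. The Greenberg--Shalom hypothesis then gives that $\pr_J(\Lambda_K)$ is an arithmetic lattice in $G_{J'}$.

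For the bootstrap, density of $\pr_n(\Gamma)$ and openness of $K$ force $\Gamma$ to act transitively on $G/(G_J \times K) \cong G_n/K$ with stabilizer $\Lambda_K$, so $\Gamma \bs G \cong \Lambda_K \bs (G_J \times K)$. A fundamental domain on the right is $F_0 \times G_{J \setminus J'} \times K$, where $F_0$ is a fundamental domain for the lattice $\pr_J(\Lambda_K)$ in $G_{J'}$; this has finite volume precisely when $J' = J$, in which case $\Gamma$ is a lattice in $G$, irreducible by hypothesis. The main obstacle I foresee is therefore establishing $J' = J$: irreducibility alone does not obviously rule out that $\pr_J(\Lambda_K)$ lies Zariski-densely in a proper subproduct $G_{J'} \subsetneq G_J$, and resolving the case $J' \subsetneq J$ would likely require a contradiction derived from the arithmetic rigidity of $\Lambda_K$ interacting with the almost-density of $\pr_{J \setminus J'}(\Gamma)$, perhaps via analysis of the normal subgroup $\Gamma \cap G_{J' \cup \{n\}}$ of $\Gamma$ and its complementary quotient.
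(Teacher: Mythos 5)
Your overall strategy matches the paper's: intersect $\Gamma$ with a compact open subgroup in a nonarchimedean direction to obtain a subgroup with discrete Zariski-dense projection and almost-dense commensurator, apply Greenberg--Shalom, and then bootstrap to conclude $\Gamma$ is a lattice via Venkataramana's fundamental domain argument (the paper cites this from \cite{f-mj-vl} and \cite{lubotzky-zimmer} and spells it out). The one structural difference is that the paper removes \emph{all} nonarchimedean factors at once (taking $G_{\text{na}}$ to be their full product and $K_{\text{na}}$ a maximal compact open), whereas you peel off a single nonarchimedean factor $G_n$; this is a cosmetic variation, and both versions face the same remaining issue.

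The gap you flag at the end is genuine --- neither Lemma~\ref{lemma:zdense-comm} nor the commensurator hypothesis by itself forces $J'=J$ --- but your own suggestion of analysing $\Gamma\cap G_{J'\cup\{n\}}$ is exactly what closes it, and you already have the tools on the table. Concretely: Lemma~\ref{lemma:zdense-comm} gives $\pr_J(\Lambda_K)\subseteq G_{J'}$ literally (not just up to finite index), so $\Lambda_K\subseteq N:=\Gamma\cap G_{J'\cup\{n\}}$. Now $N$ is a discrete subgroup of $G_{J'\cup\{n\}}$ that is normal in $\Gamma$, hence normalized by $\pr_{J'\cup\{n\}}(\Gamma)$, hence by its closure, which by Proposition~\ref{prop:irred-vs-sirred} contains $G_{J'\cup\{n\}}^+$. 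But a discrete subgroup normalized by $G^+_{J'\cup\{n\}}$ meets $G^+_{J'\cup\{n\}}$ in a discrete normal subgroup, which is central (for each simple factor $H$, either $H^\circ$ is connected and the conjugation orbit map is locally constant, or $H^+$ modulo center is abstractly simple; in both cases a discrete normal subgroup is central), hence trivial since the $\bbG_i$ are adjoint. Since $[G_{J'\cup\{n\}}:G^+_{J'\cup\{n\}}]<\infty$, $N$ is finite --- contradicting that $\Lambda_K$ is infinite. This is the same calculation you already used to prove injectivity of $\pr_J|_\Gamma$, just applied to a different subproduct, so nothing new is needed. If you wanted to align exactly with the paper, replacing $G_n$ by the product of all nonarchimedean factors would bring your $G_J$ down to the archimedean part $G_\infty$, which is slightly tidier but not essential.
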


\begin{proof}

If $G$ has an archimedean factor, let $G_{0}$ denote the product of all archimedean factors and $G_{\text{na}}$ the product of all nonarchimedean factors. If there is no archimedean factor, let $G_{0}$ be one of the nonarchimedean factors and $G_{\text{na}}$ the product of the remaining factors. Let $\pr_{0}$ denote the projection to $G_{0}$.

Let $K_{\text{na}}\subseteq G_{\text{na}}^+$ be a maximal compact open subgroup. Let $\Gamma_{\text{na}}\subseteq \Gamma$ denote the subgroup that maps into $K_{\text{na}}$ under projection to $G_{\text{na}}$. Then $\pr_{0}(\Gamma_{\text{na}})\subseteq G_{0}$ is irreducible {(by Proposition~\ref{prop:irred-vs-sirred})}, discrete and has commensurator containing the dense subgroup $\pr_{0}(\Gamma)\subseteq G_{0}$. In particular $\pr_{0}(\Gamma_{\text{na}})$ is Zariski dense in $G_{0}$ (see Lemma \ref{lemma:zdense-comm}), and hence by the Greenberg-Shalom hypothesis, is a lattice. It then follows that $\Gamma\subseteq G$ is a lattice by Lemma~\ref{lem-venky}.
\end{proof}


{  We present below two consequences of Proposition~\ref{prop-irred2latt}. The proofs are postponed to Section~\ref{subsec-cors4} as in some places they will need some technical results on automorphism groups of trees.}
		
\begin{cor}
	\label{noirreduciblesurfacesinpadics}
	Assume the Greenberg-Shalom hypothesis and suppose $G$ has no archimedean factors and $|I|\geq 2$. Then there is no irreducible discrete surface group or irreducible discrete finitely generated free group in $G$. If in addition all simple factors of $G$ have rank one, then there is no discrete surface group in $G$.
\end{cor}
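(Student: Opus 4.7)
The plan is to combine Proposition~\ref{prop-irred2latt} with Margulis' normal subgroup theorem for irreducible lattices in higher-rank semisimple groups. Since $|I|\geq 2$ and each $\bbG_i$ is isotropic, the total rank $\sum_i \mathrm{rank}_{k_i}(\bbG_i)$ is at least $2$, so Margulis applies to any irreducible lattice in $G$ (or in any subproduct $G_J$ with $|J|\geq 2$) and asserts that every noncentral normal subgroup has finite index. For the second statement, I will reduce non-irreducible surface groups to the irreducible case by projection, and handle a residual single-factor case using Bass--Serre theory.

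For the first statement, let $\Gamma\subseteq G$ be a discrete irreducible subgroup isomorphic to a finitely generated non-abelian free group $F_n$ ($n\geq 2$) or a surface group $\Sigma_g$ ($g\geq 2$). By Proposition~\ref{prop-irred2latt}, $\Gamma$ is an irreducible lattice in $G$, and Margulis' normal subgroup theorem then forces $\Gamma^{ab}$ to be finite. But $F_n^{ab}=\bbZ^n$ and $\Sigma_g^{ab}=\bbZ^{2g}$ are infinite, a contradiction. The excluded low-parameter cases ($\bbZ$, $\bbZ^2$, trivial) are amenable and so cannot be infinite lattices in the non-amenable $G$.

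For the second statement, assume all factors of $G$ have rank one and let $\Gamma=\Sigma_g\subseteq G$ ($g\geq 2$) be a discrete surface group. Choose $J\subseteq I$ of minimal cardinality such that $\pr_J(\Gamma)$ is discrete in $G_J$; by minimality $\pr_J(\Gamma)$ is irreducible in $G_J$, and equals $\Sigma_g/K$ where $K=\ker(\pr_J|_\Gamma)$ is a (torsion-free) normal subgroup of $\Sigma_g$. If $|J|\geq 2$, Proposition~\ref{prop-irred2latt} promotes $\pr_J(\Gamma)$ to an irreducible lattice in $G_J$, and Margulis' NST forces $\pr_J(\Gamma)^{ab}$ to be finite. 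When $K$ is trivial, $\pr_J(\Gamma)\cong\Sigma_g$ and the first statement applies directly; when $K$ is nontrivial, one argues via the extension $1\to K\to\Sigma_g\to\pr_J(\Gamma)\to 1$, combining the $\mathrm{PD}(2)$ structure of $\Sigma_g$ with the fact that $\pr_J(\Gamma)$ has cohomological dimension $|J|\geq 2$ via its cocompact action on the product of Bruhat--Tits trees, together with the classification of infinite-index normal subgroups of $\Sigma_g$ as free groups of infinite rank (arising as fundamental groups of infinite covers). If instead $|J|=1$, then $\pr_J(\Gamma)$ is a discrete subgroup of a single rank-one $p$-adic factor $G_j$, acting properly with finite stabilizers on the tree $T_j$; by Bass--Serre theory it splits as a graph of finite groups and hence is virtually free. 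When $K$ is trivial this contradicts the one-endedness of $\Sigma_g$; when $K$ is nontrivial one can proceed by induction on $|I|$ applied to the discrete subgroup $K\subseteq\prod_{i\neq j}G_i$, which is itself a normal subgroup of $\Sigma_g$.

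The main obstacle I anticipate is the case $|J|\geq 2$ with nontrivial $K$, where $\pr_J(\Gamma)$ is a proper infinite quotient of $\Sigma_g$ that need not itself be a surface group or free group. Reaching a contradiction there requires carefully combining the finite abelianization imposed by Margulis' NST with the cohomological/Poincar\'e-duality structure of the extension and the classification of normal subgroups of surface groups; this is the technical crux of the second statement.
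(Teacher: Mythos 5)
Your argument for the first statement is correct, and it takes a slightly different route than the paper's. Where the paper invokes the classification of semisimple groups that can contain a surface group (respectively, a finitely generated free group) as a lattice, you pass through Margulis' normal subgroup theorem and finite abelianization. Both are legitimate; the paper's version is arguably more direct, but yours is fine.

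The second statement is where there are genuine gaps, and they are not peripheral. First, you assert that minimality of $J$ (among subsets with $\pr_J(\Gamma)$ discrete) implies $\pr_J(\Gamma)$ is irreducible in $G_J$. But Definition~\ref{dfn:irred} has two clauses: indiscreteness of proper-subproduct projections \emph{and} $\bbQ_p$-Zariski-density of the projections to each $G_p$. Minimality of $J$ only gives you the first clause. The Zariski-density clause is not automatic and in fact is the heart of the paper's proof: they first use the strengthening of \cite[Theorem 15]{FLSS} (Theorem~\ref{thm:flssimproved}) to show the per-factor projections do not virtually fix a point at infinity, which forces the $\bbQ_{p_j}$-Zariski closures to be simple; they then replace each $G_j$ by this closure and run a separate argument (the $J_p'$ versus $J_p$ analysis) to establish $\bbQ_p$-Zariski-density of the product projections. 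Your proposal says nothing about this, so the invocation of Proposition~\ref{prop-irred2latt} is unjustified.

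Second, by choosing $J$ only for discreteness, you lose faithfulness of $\pr_J|_\Gamma$, and you then have to contend with a possibly nontrivial kernel $K$. Your treatment of that case — invoking $\mathrm{PD}(2)$ duality, cohomological dimension, and the structure of infinite-index normal subgroups of surface groups, without actually deriving a contradiction — is a plan, not a proof. Likewise, in the $|J|=1$ case with $K\neq 1$, the proposed ``induction on $|I|$ applied to $K$'' does not go through: $K$ is a normal subgroup of infinite index in $\Sigma_g$, hence a free group of infinite rank, which is neither a surface group nor a finitely generated free group, so the corollary's statement does not apply to it. The paper sidesteps all of this by applying \cite[Theorem 15]{FLSS} at the outset, which supplies a subset $J$ with $|J|\geq 2$ such that $\pr_J$ is both discrete \emph{and} faithful on $\Lambda$; that theorem (and its refinement in Section~\ref{sec:flssimproved}) is doing essential work that your argument does not replace.
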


{ We note here that the first conclusion follows if one knows that an irreducible lattice in a product cannot be a free or surface
group. There are many simple ways to verify this. }

We call a surface group $\Lambda$ in $\PSL(2,\bbR)$ \emph{algebraic} if $\Lambda < \PSL(2, \overline{\bbQ})$.  By finite generation of surface
groups it follows that $\Lambda < \PSL(2, k)$ for some number field $k$.
{We assume that $k$ is minimal.} Let $\calO_k$ be the ring of integers of $k$.  Then
we have:

\begin{cor}
	\label{cor:algebraicsurfacegroups}
	Let $\Lambda$ be an algebraic surface group and assume the Greenberg-Shalom hypothesis.  Then $\Gamma := \Lambda \cap \PSL(2, \calO_k)$ is
	infinite, commensurated by $\Lambda$ and Zariski dense in $\PSL(2,k)$.
\end{cor}

{ 
\noindent {\bf Historical notes and questions:}\\
We refer the reader to Section~\ref{subsec-cors4} for proofs of the above two corollaries. However, a word is in order regarding the history of the
circle of questions that leads to Corollaries~\ref{noirreduciblesurfacesinpadics} and
~\ref{cor:algebraicsurfacegroups}.}

The existence of discrete irreducible surface subgroups in products of $p$-adic groups is a folklore problem that was widely discussed at MSRI in 2015. A variant for actions on products of trees was asked explicitly by Fisher-Larsen-Spatzier-Stover \cite{FLSS}. {We shall discuss this in Section~\ref{sec:flssimproved}.} In a conversation in January 2019, Yves Benoist pointed out to the second author that it was also unknown if there are irreducible free groups in products of real and $p$-adic Lie groups or irreducible surface groups in products of real Lie groups. {In fact, Benoist first informally posed this question as far back as 2003 or 2004. We thank Tsachik Gelander for first telling us the rough age of Benoist's question.}

Further, Long and Reid, using some ideas of Magnus \cite{Magnus}, constructed an explicit surface group in $\PGL(2, \mathbb{Q}_2) \times \PGL(2, \mathbb{Q}_3)$ generated by the two matrices: $$a=\begin{pmatrix} 3 &0 \\ 0 & \frac{1}{3}\end{pmatrix}, \qquad b=\begin{pmatrix} \frac{1}{8} & 9 \\ \frac{1}{32} & \frac{41}{4}\end{pmatrix}$$
\noindent This is an orbifold group with an index 4 subgroup that corresponds to a cover by a surface of genus two.  Extensive computations by Long-Reid, Agol and Brody suggest that this example is a discrete, irreducible subgroup of the product but according to Proposition \ref{prop-irred2latt}, this would give a negative answer to Greenberg-Shalom's Question \ref{q:shalom}.  It is worth comparing with the computer-assisted results of Kim-Koberda \cite{kim-koberda-notfree} that are relevant to another application below (see Section \ref{sec:lukk}), where groups were shown to be non-free by finding extremely long relators.  In fact, R~.Yaari recently showed that in that context, groups with arbitrarily long shortest relators necessarily occur \cite{Yaari}. If Hypothesis \ref{q:shalom} is correct, then the computational evidence above about the Long-Reid group only indicates that one needs extremely long words in $a$ and $b$ to find elements of integral trace.

In Corollaries~\ref{noirreduciblesurfacesinpadics} and \ref{cor:algebraicsurfacegroups}, we require at least one factor to be totally disconnected (i.e. a $p$-adic Lie group). As mentioned above, the second author learned of variants of this question where both factors are real Lie groups from Yves Benoist in January 2019.  The simplest and perhaps most intriguing case is:

\begin{question}[Benoist]
Is there a free group that acts properly discontinuously and irreducibly on $\bbH^2\times\bbH^2$?
\label{q:benoist}
\end{question}

We believe the requirement of a totally disconnected factor in Proposition \ref{prop-irred2latt} is an artefact of the method rather than a genuine difference. By analogy with Proposition \ref{prop-irred2latt} we propose the following strengthening of Benoist's question:

\begin{problem}
Let $G_1, G_2$ be real semisimple Lie groups with finite center and no compact factors, and suppose $\Delta\subseteq G_1\times G_2$ is a discrete and irreducible subgroup. Is $\Delta$ an arithmetic lattice? \label{prob:irred_arch}
\end{problem}

Just as in Problem \ref{prop-irred2latt}, irreducible groups $\Delta$ as in the above problem give rise to objects in one factor with large commensurator. But since $G_2$ is no longer totally disconnected, they are no longer groups: More precisely, consider $\Gamma:=\Delta\cap(G_1\times U)$ where $U\subseteq G_2$ is an open neighborhood of identity with compact closure. Note that $\Gamma$ is no longer a subgroup, but only an \emph{approximate subgroup}.  This approximate subgroup is commensurated by $\Delta$ in the sense of \cite[Definition 2.1.5]{Machado2}.
{  For completeness, we include the relevant definitions here.

\begin{dfn}\label{def-app}
Let $\Theta$ be a subset of a locally compact group $G$. We say that
$\Theta$
is an \emph{approximate subgroup} if
\begin{enumerate}
\item $\Theta$  is symmetric, i.e.\  $\Theta=\Theta^{-1}$,
\item $1 \in \Theta$, and
\item there exists a finite set $F \subset G$ such that
$\Theta^2 \subset F\Theta$. \\
\end{enumerate}
	\end{dfn}

	\begin{dfn} Let $X, Y$ be  subsets of a locally compact group $G$.
		We say that $X, Y$ are
\emph{commensurable}  if there  exists a finite set $F \subset G$ such that
	that
	$ X \subset  F Y \cap Y F$ and $Y \subset  F X \cap XF$. We say that
	$g \in G$
\emph{commensurates} $X$ if $gXg^{-1}$ and $X$ are commensurable.
	\end{dfn}

}

 By analogy with Proposition \ref{prop-irred2latt} we suggest the following variation of Greenberg-Shalom's Question \ref{q:shalom} as a way of approaching Problem \ref{prob:irred_arch}:

\begin{question} \label{q:approx}
Let $G$ be as in the Standing Assumptions \ref{assumptions} and let $\Gamma$ be a discrete, Zariski dense, approximate subgroup with almost dense commensurator. Is $\Gamma$ an approximate lattice?\end{question}

We remark here that for approximate subgroups of $G$, there is a well-defined notion of being an approximate lattice and of arithmeticity, and Machado (for real higher rank Lie groups) \cite{machado} and Hrushovski (in general) \cite{hrushovski} have simultaneously classified approximate lattices in products of simple algebraic groups defined over local fields: namely, for any approximate lattice $\Lambda\subseteq G$, we can decompose $G=G_1\times G_2$ such that $\Lambda$ is commensurable to a product of a lattice in $G_1$ and an arithmetic approximate lattice in $G_2$. Here, an \emph{arithmetic approximate lattice} in $G$ is either an arithmetic lattice or is obtained as the intersection of an irreducible lattice in $G\times H$ with $G\times U$, where $U$ is an open neighborhood of identity in $H$.  For a thorough account of the state of the art on approximate lattices, see Machado's recent preprint \cite{Machado2}.

\subsection{Groups generated by parabolic elements} \label{sec:lukk} For the remainder of this section, we discuss applications of rigidity of irreducible groups in semisimple Lie groups to problems that do not seemingly involve irreducible groups or commensurators. To state this problem, we let $q\in\bbC$ be a parameter, and we let $\Delta_q$ be the group generated by
	$$a=\begin{pmatrix} 1 & 0 \\ 1 & 1\end{pmatrix}, \qquad b_q=\begin{pmatrix} 1 & q \\ 0 & 1\end{pmatrix}.$$
The algebraic structure of $\Delta_q$, especially whether it is free or not, has been the subject of much work, starting with Sanov's 1947 theorem that $\Delta_4$ is free \cite{sanov}, and Brenner's theorem that $\Delta_q$ is free and discrete if $|q|>4$ \cite{brenner}, which implies $\Delta_q$ is free for transcendental values of $q$. Freeness of $\Delta_q$ for nonzero rational values of $q$ in $(-4,4)$ is a long-standing open problem first mentioned by Brenner and Hirsch. The question was really first studied in the work of Lyndon-Ullman, and later a negative answer has been conjectured by Kim-Koberda:
\begin{conj}[{Lyndon-Ullman, Kim-Koberda \cite{lyndon-ullman, kim-koberda-notfree}}]
For nonzero rational values of $q$ in $(-4,4)$, the group $\Delta_q$ is not free.
\label{conj:lu}
\end{conj}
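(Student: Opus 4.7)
The plan is to argue by contradiction using Proposition \ref{prop-irred2latt}. Assume $\Delta_q$ is free for some nonzero rational $q\in(-4,4)$. I first dispose of the integer cases: if $q\in\{-3,-2,-1\}$ then $ab_q$ has trace $q+2\in\{-1,0,1\}$ and is therefore torsion, while if $q\in\{1,2,3\}$ then $ab_q^{-1}$ has trace $2-q\in\{-1,0,1\}$ and is torsion. In both cases $\Delta_q$ has torsion, contradicting freeness (with no need of the Greenberg--Shalom hypothesis).

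For $q=a/b$ a non-integer rational in lowest terms with $b>1$, the group $\Delta_q$ is contained in $\PSL(2,\bbZ[1/b])$ and embeds diagonally into $G:=\PSL(2,\bbR)\times\prod_{p\mid b}\PSL(2,\bbQ_p)$, where it is discrete as a subgroup of an $S$-arithmetic lattice. I claim two key indiscreteness statements: (i) $\pr_\infty(\Delta_q)$ is non-discrete in $\PSL(2,\bbR)$, and (ii) for every $p\mid b$, $\pr_p(\Delta_q)$ is non-discrete in $\PSL(2,\bbQ_p)$. For (i), the trace computation above shows that $ab_q$ (if $q<0$) or $ab_q^{-1}$ (if $q>0$) has rational trace in $(-2,2)\setminus\{-1,0,1\}$, hence is elliptic of infinite order by Niven's theorem, making $\Delta_q$ non-discrete in $\PSL(2,\bbR)$. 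For (ii), note that $b_q^{b^N}=\bigl(\begin{smallmatrix}1 & ab^{N-1}\\0 & 1\end{smallmatrix}\bigr)$ has $(1,2)$-entry of $p$-adic norm $\le p^{-v_p(b)(N-1)}\to 0$, so the distinct elements $b_q^{b^N}$ converge to $I$ in $\PSL(2,\bbQ_p)$. The same computation shows that the projection of $\Delta_q$ to $\prod_{p\in J}\PSL(2,\bbQ_p)$ is non-discrete for every $J\subseteq\{p:p\mid b\}$.

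Now pick $J\subseteq I$ to be a minimal subset (with respect to inclusion) for which $\pr_J(\Delta_q)$ is discrete in $G_J$; such $J$ exists since $\pr_I(\Delta_q)=\Delta_q$ is itself discrete. By (i) and (ii) no single factor has discrete image, so $|J|\ge 2$, and since $J\ne\{\infty\}$, $J$ contains a nonarchimedean factor. Minimality forces $\pr_{J'}(\Delta_q)$ to be non-discrete for every proper subset $J'\subsetneq J$; combined with the $\bbQ_p$-Zariski density of $\Delta_q$ in $\PSL(2)$ (automatic since $a,b_q$ are noncommuting parabolics with distinct fixed points), this shows $\pr_J(\Delta_q)$ is discrete and irreducible in $G_J$. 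Because the entries of $\Delta_q$ are rational, $\ker(\pr_J)\cap\Delta_q=\{I\}$, so $\pr_J(\Delta_q)\cong\Delta_q$ is still free. Proposition \ref{prop-irred2latt} then asserts that $\pr_J(\Delta_q)$ is an irreducible lattice in $G_J$; but a finitely generated free group cannot be a lattice in any product of simple algebraic groups with more than one factor (as noted in the proof of Corollary \ref{noirreduciblesurfacesinpadics}), contradicting our assumption.

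The main obstacle in this outline is establishing indiscreteness of projections of $\Delta_q$ to \emph{mixed} proper subproducts such as $\PSL(2,\bbR)\times\prod_{p\in J'}\PSL(2,\bbQ_p)$ with $J'\subsetneq\{p\mid b\}$, where neither the Niven argument nor the $b_q^{b^N}$ computation applies directly. This is finessed by taking $J$ minimal among subsets of $I$ with discrete projection: indiscreteness of $\pr_{J'}(\Delta_q)$ for every $J'\subsetneq J$ is then a formal consequence of minimality, so the only concrete analytic inputs needed are the trace/Niven argument in the archimedean factor and the $b_q^{b^N}$ convergence in each individual $p$-adic factor.
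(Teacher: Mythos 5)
Your proof is correct and takes essentially the same approach as the paper: embed $\Delta_q$ as a discrete subgroup of $\SL(2,\bbR)\times\prod_{p\mid b}\SL(2,\bbQ_p)$, pass to a minimal subproduct with discrete image, check irreducibility using indiscreteness in each single factor plus Zariski-density, and invoke Proposition~\ref{prop-irred2latt} to derive a contradiction with freeness. The only differences from the paper are cosmetic: you establish archimedean indiscreteness directly via the trace computation and Niven's theorem rather than quoting Knapp (the paper actually makes the same trace observation in its discussion), you establish $p$-adic indiscreteness via $b_q^{b^N}\to I$ instead of observing $a\in\SL(2,\bbZ_p)$ has infinite order in a compact group, and you dispose of the integer cases $q\in\{\pm1,\pm2,\pm3\}$ by exhibiting torsion (the paper's Theorem~\ref{thm:lukk} only treats non-integral $q$). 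The paper also proves the stronger statement that $\Delta_q$ has finite index in $\SL(2,\bbZ[1/s])$ by showing $T$ contains $\infty$ and pinning down the primes with a trace argument, whereas you stop at the contradiction with freeness, which is all the conjecture requires.
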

Kim-Koberda have proved this for numerators up to 24. Further experimental results are obtained by Detinko-Flannery-Hulpke \cite{lu-experiments}. For $|q|<4$ (not necessarily rational), Knapp has proved that $\Delta_q$ is discrete if and only if $1-q/2=\cos((n-2)\pi/n)$ for some integer $n\geq 3$ \cite{knapp-lu}. Indeed, note that the element $ab_q^{-1}$ has trace $|2-q|<2$, hence it is elliptic. When $q$ is a rational non-integer, this must be an elliptic of infinite order. The case that $\Delta_q$ is discrete has been further studied by Agol, showing in particular that $\Delta_2$ and $\Delta_3$ are not free, see \cite{agolwriteup}. Knapp's theorem immediately implies $\Delta_q$ is indiscrete for nonintegral rational values with $|q|<4$. A complete description of $\Delta_q$ up to finite index, and in particular a positive answer to the above conjecture, follows from Greenberg-Shalom's problem:

\begin{thm} \label{thm:lukk}
Assume the Greenberg-Shalom Hypothesis \ref{q:shalom}. Then for every non-integral rational $q=r/s\in (-4,4)$, the group $\Delta_q$ has finite index in $\SL(2,\bbZ[1/s])$. In particular, Conjecture \ref{conj:lu} is true.
\end{thm}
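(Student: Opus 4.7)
The plan is to realize $\Delta_q\subseteq \SL(2,\bbZ[1/s])$ as a discrete, irreducible subgroup of the $S$-arithmetic ambient group
\[
G:=\SL(2,\bbR)\times\prod_{p\mid s}\SL(2,\bbQ_p),
\]
so that Proposition~\ref{prop-irred2latt} yields $\Delta_q$ as an irreducible lattice in $G$. Since $\SL(2,\bbZ[1/s])$ is itself a lattice in $G$ containing $\Delta_q$, this forces $[\SL(2,\bbZ[1/s]):\Delta_q]<\infty$, and Conjecture~\ref{conj:lu} then follows because $\SL(2,\bbZ[1/s])$ is not virtually free for $s\geq 2$: for any $p\mid s$ it contains the non-virtually-free Baumslag--Solitar group $BS(1,p^2)$.

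Discreteness in $G$ is immediate; for irreducibility (Definition~\ref{dfn:irred}), Zariski-density of each factor projection is automatic since $a$ and $b_q$ are non-commuting unipotents with distinct fixed points on $\bbP^1$. For non-discreteness of $\pr_J(\Delta_q)$ with $J\subsetneq I:=\{\infty\}\cup\{p:p\mid s\}$ and $\infty\notin J$, the sequence $a^{s^n}=\begin{pmatrix}1 & 0 \\ s^n & 1\end{pmatrix}$ tends to the identity in every $\SL(2,\bbQ_p)$-factor through distinct images (since $|s^n|_p\to 0$); for $J=\{\infty\}$ alone this is Knapp's theorem for non-integral rational $q\in(-4,4)$. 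The remaining case $J=\{\infty\}\cup T$ with $\varnothing\neq T\subsetneq\{p:p\mid s\}$ is addressed by a minimal-counterexample argument: pick $J_0$ minimal non-empty with $\pr_{J_0}(\Delta_q)$ discrete. By the easy cases $J_0=\{\infty\}\cup T_0$ with $\varnothing\neq T_0\subsetneq\{p:p\mid s\}$, and by minimality $\pr_{J_0}(\Delta_q)$ is irreducible in $G_{J_0}$; Proposition~\ref{prop-irred2latt} applied to $G_{J_0}$ (at least two factors, at least one nonarchimedean) then yields an irreducible lattice. Since $G_{J_0}$ has total rank $\geq 2$, Margulis arithmeticity identifies this lattice up to commensurability with the split $S$-arithmetic lattice $\Lambda:=\SL(2,\bbZ[1/N])$, $N:=\prod_{p\in T_0}p$---the presence of the unipotents $a,b_q$ rules out quaternion-division-algebra forms. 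Thus some finite-index $\Delta_0\subseteq\Delta_q$ has $\pr_{J_0}(\Delta_0)\subseteq\Lambda$, and since a rational matrix is determined by its image at any place this upgrades to $\Delta_0\subseteq\SL(2,\bbZ[1/N])$ inside $\SL(2,\bbQ)$.

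Now pick $p_0\in\{p:p\mid s\}\setminus T_0$: since $p_0\nmid N$ one has $\bbZ[1/N]\subseteq\bbZ_{p_0}$, so $\Delta_0$ fixes the standard vertex $v_0$ of the Bruhat--Tits tree of $\SL(2,\bbQ_{p_0})$, and hence the $\Delta_q$-orbit of $v_0$ is finite. By Serre's fixed-point theorem $\Delta_q$ must then fix a vertex or edge-midpoint of that tree. But the product $ab_q$ has trace $q+2$ with $|q+2|_{p_0}>1$ (because $\gcd(r,s)=1$ and $p_0\mid s$ force $v_{p_0}(q+2)=-v_{p_0}(s)$), so $ab_q$ acts as a hyperbolic isometry on the tree with no fixed vertex or midpoint
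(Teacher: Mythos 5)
Your proposal is correct and follows essentially the same strategy as the paper: embed $\Delta_q$ discretely in $G=\SL(2,\bbR)\times\prod_{p\mid s}\SL(2,\bbQ_p)$, use Knapp (at $\infty$) and the infinite order of $a$ in $\SL(2,\bbZ_p)$ (at each $p\mid s$) to see that no single factor gives a discrete projection, then apply Proposition~\ref{prop-irred2latt} to a minimal subset of factors with discrete projection, identify the lattice as an $\SL(2,\bbZ[1/N])$, and finally use the $p$-adic size of $\mathrm{tr}(ab_q)=q+2$ to force $N=s$. The only differences are cosmetic: you package the last step as a contradiction via Serre's fixed-point theorem on the $p_0$-adic tree, whereas the paper observes more directly that $ab_q$ has unbounded powers in each $\SL(2,\bbQ_p)$ for $p\mid s$ and hence that the minimal factor set must contain all of them; both rest on the same trace computation. (You also correctly note that the paper's statement that ``$ab$ has trace $2-q$'' should read $ab_q^{-1}$, or else $\mathrm{tr}(ab_q)=q+2$, though this does not affect the argument, as $|q+2|_p=|2-q|_p=|q|_p$ for $p\mid s$.)
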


\noindent In this context it is particularly worth mentioning that the work of Detinko-Flannery-Hulpke \cite{lu-experiments} proves that for certain {values of $q$, $\Delta_q$ is finite index in $\SL(2,\bbZ[1/s])$, see
\cite[Table 1]{lu-experiments}}.  There are currently no known values of $q$ for which the (conditional) conclusion of Theorem \ref{thm:lukk} fails to hold.

\begin{proof} If $q=r/s$ is written in lowest terms and is not integral, then $\Delta_q$ is a discrete subgroup of $G:=\SL(2,\bbR)\times \prod_{p\mid s} \SL(2,\bbQ_p)$. Let $T$ be a minimal subset of factors such that $\Delta_q$ has discrete projection to $G_T$. Note that $T$ consists of at least two factors: The projection of $\Delta_q$ to $\SL(2,\bbR)$ is not discrete by the above-mentioned work of Knapp \cite{knapp-lu}, and its projection to $\SL(2,\bbQ_p)$ is not discrete because $a\in \SL(2,\bbZ_p)$ has infinite order. Further the projections are Zariski dense since their Zariski closures cannot be solvable, and $\SL(2,\bbR)$ and $\SL(2,\bbQ_p)$ do not contain proper Zariski closed non-solvable subgroups. Therefore $\pr_T(\Delta_q)\subseteq G_T$ is irreducible, and by Proposition \ref{prop-irred2latt}, $\pr_T(\Delta_q)$ is a lattice in $G_T$. Note that $T$ must contain $\infty$, because $a$ generates an indiscrete subgroup of $\prod_{p\mid s} \SL(2,\bbQ_p)$. So $\Delta_q\subseteq \SL(2,\bbZ[1/s])$ is $T$-arithmetic, and is therefore commensurable with $\SL(2,\bbZ[1/t])$, where $t$ denotes the product of the finite primes in $T$.

To see that $t=s$, it suffices to show that $\Delta_q$ has unbounded projection to $\SL(2,\bbQ_p)$ for all $p\mid s$. Indeed, $ab$ has trace $2-q = 2-r/s$, and is therefore not elliptic in $\SL(2,\bbQ_p)$. In particular, $ab$ generates an unbounded subgroup. \end{proof}
In fact, this strategy can handle not just  rational, but all algebraic values of $q$:
\begin{thm}
\label{thm:genLU}
Assume the Greenberg-Shalom Hypothesis \ref{q:shalom}. Let $q\in\overline{\bbQ}$ be an algebraic number that is not an algebraic integer, and let $\Delta_q$ denote the corresponding Lyndon-Ullman group.

Then $\Delta_q$ is free if and only if there is a Galois automorphism $\sigma \in \Gal(\bar{\bbQ}/\bbQ)$ such that $\Delta_{\sigma(q)}$ is free and discrete.
\end{thm}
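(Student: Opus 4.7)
The ``if'' direction is formal: applying a Galois automorphism $\sigma\in\Gal(\bar\bbQ/\bbQ)$ entry-wise is a ring homomorphism of $M_2(\bar\bbQ)$, so it induces a group isomorphism $\Delta_q\cong\Delta_{\sigma(q)}$. A relation $w(a,b_q)=I$ amounts to the vanishing of certain integer polynomials in $q$ in the four matrix entries of $w(a,b_q)$, and this is preserved by $\sigma$. Thus freeness is a Galois invariant, and the discreteness hypothesis plays no role in this direction.

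For the ``only if'' direction, assume $\Delta_q$ is free. Following the strategy of Theorem \ref{thm:lukk}, let $k:=\bbQ(q)$ and let $S$ be the finite set of places of $k$ consisting of all archimedean places together with the finite places $v$ with $|q|_v>1$; since $q$ is not an algebraic integer, $S$ contains at least one finite place. Then $\Delta_q\subset\SL(2,\calO_{k,S})$, and discreteness of $\calO_{k,S}$ in $\prod_{v\in S}k_v$ makes the diagonal map embed $\Delta_q$ as a discrete subgroup of $G:=\prod_{v\in S}\SL(2,k_v)$. Choose $T\subseteq S$ minimal with the property that $\pr_T(\Delta_q)$ is discrete in $G_T$. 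By minimality, $\pr_T(\Delta_q)$ is irreducible in the sense of Definition \ref{dfn:irred}, and its projection to each factor is Zariski-dense, since $\langle a,b_q\rangle$ is non-solvable and every proper Zariski-closed subgroup of $\SL(2)$ is solvable.

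The plan is then to rule out every shape of $T$ except a single archimedean place. First, $T$ cannot be a single non-archimedean place $\{v\}$: the generator $a\in\SL(2,\calO_v)$ has infinite order, so $\pr_v(\Delta_q)\supseteq\langle a\rangle$ is non-discrete in the compact subgroup $\SL(2,\calO_v)\subset\SL(2,k_v)$. Second, if $|T|\geq 2$ and $T$ contains at least one non-archimedean factor, Proposition \ref{prop-irred2latt} applies to $G_T$ and forces $\pr_T(\Delta_q)$ to be an irreducible lattice in $G_T$. But no rank-two free group can be an irreducible lattice in a product of at least two rank-one simple algebraic groups over local fields: Hilbert modular, $\SL(2,\calO_{k,T})$-type, and Burger-Mozes lattices all satisfy nontrivial relations (from torsion, surface subgroups, or product-of-trees structure), contradicting freeness of $\Delta_q$.

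The main obstacle is the remaining case where $|T|\geq 2$ and all factors of $T$ are archimedean. Proposition \ref{prop-irred2latt} does not apply here, since it requires a non-archimedean factor in the ambient group; indeed, a treatment of this case from the Greenberg-Shalom Hypothesis alone is essentially equivalent to a positive answer to Problem \ref{prob:irred_arch} (and in particular to Benoist's Question \ref{q:benoist}). My plan is to exploit the non-archimedean factors of $S\setminus T$, which are available since $S$ contains a finite place, to produce an almost dense commensurator of $\pr_T(\Delta_q)$ inside $G_T$---for example by pushing elements of a natural over-group of $\Delta_q$ in $\SL(2,k)$ to $G_T$ via weak approximation---and then invoke Hypothesis \ref{q:shalom} directly to conclude $\pr_T(\Delta_q)$ is an arithmetic lattice, again contradicting freeness. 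Once this case is excluded, $T=\{v\}$ for a single archimedean $v$, and the embedding $k\hookrightarrow k_v\hookrightarrow\bbC$ provides the desired Galois automorphism $\sigma$ for which $\Delta_{\sigma(q)}=\pr_v(\Delta_q)$ is discrete.
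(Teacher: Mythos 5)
Your ``if'' direction is correct and matches the intent of the paper: applying $\sigma$ entry-wise fixes $a$ and sends $b_q\mapsto b_{\sigma(q)}$, giving an abstract isomorphism $\Delta_q\cong\Delta_{\sigma(q)}$, so freeness is a Galois invariant and the discreteness hypothesis is not needed for this implication. For the ``only if'' direction, your route is modeled on Theorem \ref{thm:lukk} (choose a minimal $T\subseteq S$ with $\pr_T(\Delta_q)$ discrete), whereas the paper's own argument is genuinely different: it never introduces a minimal $T$, but instead fixes a single non-archimedean $\nu$ with $|q|_\nu>1$, shows via a Lie-algebra argument over $k_\nu$ that the $\bbQ_p$-Zariski-closure of $\Delta_q$ in $R_{k_\nu/\bbQ_p}\PSL_2$ is semisimple (hence that $\Delta_q$ is almost dense in a semisimple $p$-adic group), and then invokes the Greenberg--Shalom Hypothesis to conclude $\Delta_q$ is a higher-rank arithmetic lattice and therefore not free. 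Your minimal-$T$ reduction is perfectly valid and, if anything, makes the structure of the argument more transparent than the paper's terse sketch, and your treatments of $T=\{\nu\}$ nonarchimedean (ruled out since $a$ has infinite order in the bounded group $\PSL(2,\calO_\nu)$) and $|T|\ge 2$ with a nonarchimedean factor (via Proposition \ref{prop-irred2latt}) are sound.

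The genuine gap is exactly the case you flag: $|T|\ge 2$ with $T$ entirely archimedean. You are right that Proposition \ref{prop-irred2latt} does not apply and that a treatment from Hypothesis \ref{q:shalom} alone amounts to a positive answer to Problem \ref{prob:irred_arch}. Your proposed fix, however, does not obviously work as sketched: in this case $\pr_T(\Delta_q)$ is \emph{discrete} in $G_T$, so pushing $\Delta_q$ itself to $G_T$ cannot supply an almost-dense commensurator, and it is not clear which overgroup of $\Delta_q$ inside $\PSL(2,k)$ would both commensurate $\Delta_q$ and have almost-dense image in $G_T$ --- weak approximation gives density for $\PSL(2,k)$ or its $S$-arithmetic subgroups, but those need not commensurate the thin group $\Delta_q$. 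So your proposal is an honest and well-organized reduction that isolates a real subtlety, but it is not a complete proof as written; note that the paper's own proof does not visibly engage with this case either (and also omits the ``if'' direction entirely), so this is a substantive observation about the theorem rather than merely a shortcoming of your attempt.
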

\begin{proof}[Proof of Theorem \ref{thm:genLU}] {We first note that for any Galois automorphism $\sigma \in \Gal(\bar{\bbQ}/\bbQ)$, $\Delta_{\sigma(q)}$ is free if and only if $\Delta_q$ is  free. Hence, assume that for any Galois automorphism $\sigma \in \Gal(\bar{\bbQ}/\bbQ)$, $\Delta_{\sigma(q)}$ is indiscrete. We want to show that $\Delta_q$ is not free.}
	
	The proof is similar to the previous one. Let $k=\bbQ(q)$. Since $q\notin \calO_k$,  there exists a valuation $\nu$ of $k$ such that $|q|_\nu>1$. Let $p$ be the prime such that $k_\nu$ is a finite extension of $\bbQ_p$ and consider $\Delta_q\subseteq \PSL(2,k_\nu)$. Note that $\Delta_q$ is not discrete since $a\in \Delta_q\cap \PSL(2,\calO_\nu)$ has infinite order. Therefore to show $\Delta_q$ is almost-dense in a semisimple $p$-adic Lie group, it suffices to show its $\bbQ_p$-Zariski closure is semisimple. To see this, it suffices to show the Lie algebra $\frg_q$ of the $\bbQ_p$-Zariski closure of $\Delta_q$ is semisimple.

Note that $\Delta_q$ is $k_\nu$-Zariski dense in $\PSL(2,k_\nu)$ because it is not virtually solvable. This implies the $k_\nu$-span of $\frg_q$ is all of $\mathfrak{sl}(2,k_\nu)$ (because it is a $\Delta_q$-invariant Lie subalgebra), and therefore $\frg_q$ must itself be semisimple: The $k_\nu$-span of its solvable radical would be an ideal in the $k_\nu$-span of $\frg_q$.

{By assumption,  for any Galois automorphism $\sigma \in \Gal(\bar{\bbQ}/\bbQ)$ $\Delta_{\sigma(q)}$ is indiscrete. Hence, by} the Greenberg-Shalom hypothesis, $\Delta_q$ is an arithmetic lattice in a semisimple $p$-adic Lie group $H$. Note that $H$ has at least two factors because $\Delta_q$ is not discrete in its $\bbQ_p$-Zariski closure inside $\PSL(2,k_\nu)$. In particular, $\Delta_q$ is a higher rank arithmetic lattice and hence not free.\end{proof}

\subsection{Arithmetic of hyperbolic 3-manifolds} \label{sec:arith-3dim}

In this section we let $G=\PO(3,1)=\Isom(\mathbb{H}^3)$ and discuss an application to hyperbolic 3-manifolds, i.e. to manifolds of the form $K \backslash G/\Lambda$ where $\Lambda$ is a lattice.  It follows from
{\cite[Corollary 3.2.4]{Maclachlan-Reid}} that there is a number field $k$ such that $\Lambda < G(k)$ {(after conjugation by an element of $G$ if necessary).
More generally, by work of Selberg, Calabi, Raghunathan, and Garland \cite{Selberg, Calabi, RaghunathanRigid, Garland} for $G$ a rank one Lie
group, not locally isomorphic to $\SL(2,\R)$, the same statement
continues to hold for lattices in $G$. Unless $G$ is of the form $\SO(n,1)$ stronger statements are known.  In higher rank Margulis famously showed all lattices are arithmetic, a conclusion confirmed also for lattices in $\Sp(n,1)$ and $F_4^{-20}$ by Corlette and Gromov-Schoen \cite{margulis-icm, corlette,gromovschoen}.  Very recently, Esnault-Groechenig have shown that all lattices in $\SU(n,1)$  are in fact conjugate into the integer points of some number field \cite{esnault-groechenig}.  For an account of why the results of that paper imply the desired result the reader can see \cite[Theorem 1.(3)]{BFMS2} or \cite[Theorem 4.3]{BU}.}

It is a natural and reasonably well-known question to ask whether $\Lambda$ necessarily contains any integral matrices when $G=\SO(n,1)$.  We let $\mathcal{O}_{k}$ denote the ring of $k$-integers.

For completeness, we first furnish a proof of the following known statement for non-uniform lattices.
 We thank Alan Reid for drawing this fact and its proof to our attention.
We give a statement for non-uniform lattices in all rank one Lie groups, even though except in the case of $\SO(n,1)$, stronger results are discussed above.

{ 	For $G$  a rank one
	Lie group not locally isomorphic to $\SL(2,\R)$,  any lattice $\Lambda<G$ is conjugate into $G(k)$ for some number field $k$
	as noted above. We assume below that $k$ is a minimal extension of $\Q$.}

\begin{thm}\label{thm-nonu}
	Let $G$ be a rank one Lie group not locally isomorphic to $\SL(2,\R)$ and $\Lambda<G$ a non-uniform lattice. Let $k$ be as above, and
	$\mathcal{O}_{k}$ its ring of integers. Then {the} subgroup $\Gamma := \Lambda \cap G(\mathcal{O}_{k})$
	is infinite, commensurated by $\Lambda$, and Zariski dense in $G(k)$.
\end{thm}

\begin{proof} { 	Since $\Lambda$ is finitely generated, the set of places $S$ of $k$ such that there exists a matrix entry of a generator of $\Lambda$ with negative valuation is finite. It follows immediately that $\Lambda$ commensurates $\Gamma$. Since the commensurator of $\Gamma$ is Zariski dense, as soon as $\Gamma$ is infinite, $\Gamma$ itself is Zariski dense (see Lemma \ref{lemma:zdense-comm} and Remark \ref{rem:nrfield}).}
	
	It thus suffices to show that $\Gamma$ is infinite.
	By results of Garland-Raghunathan \cite{GarRag}, we know that for some choice of parabolic $P$, $\Gamma$ intersects the unipotent radical $U$ of $P$ in a lattice.  Therefore
	$\Gamma \cap Z(U)$ is also non-trivial where $Z(U)$ is the center of $U$.  If $\gamma \in \Z(U)(k)$ then it is elementary that for some $n$, we have $\gamma^n \in Z(U)(\mathcal{O}_k)$ since on the center of a unipotent group, multiplication is just addition of matrix coefficients.
	Hence, $\Gamma$ is infinite.
\end{proof}

As an application of our methods, we prove the following more general,
but conditional statement that holds when $\Lambda$ is uniform and $G=\SO(3,1)$:

\begin{thm}
\label{thm:3manifold}  Assume the Greenberg-Shalom Hypothesis \ref{q:shalom}. For any finite volume hyperbolic 3-manifold $M=\bbH^3/\Lambda$ and $k, \mathcal{O}_{k}$ as above, the subgroup $\Gamma := \Lambda \cap G(\mathcal{O}_{k})$
is infinite, commensurated by $\Lambda$, and Zariski dense in $G(k)$.
\end{thm}

\begin{proof}
As discussed above, along with the number field $k$, there exists  an algebraic group $\bbG$ defined over $k$ such that $\Lambda\subseteq \bbG(k)$. Here $\Lambda$ is a lattice in $\bbG(\bbR)=\PO(3,1)$, so $\bbG$ is a form of $\PO(4)(\mathbb{C})$. As in the proof of Theorem~\ref{thm-nonu},
the set of places $S$ of $k$ such that there exists a matrix entry of a generator of $\Lambda$ with valuation $>1$, is finite. It follows immediately that $\Lambda$ commensurates $\Gamma$.
 It remains to prove that $\Gamma$ is infinite.

At any place $s\notin S$, we have $\Lambda\subseteq \bbG(\mathcal{O}_s)$. In particular, if $S=\varnothing$, then $\Gamma=\Lambda$ and we are done. Now assume $S\neq \varnothing$. If $\Lambda\subseteq G_S$ is indiscrete, then $\Gamma=\Lambda\cap \bbG(\mathcal{O}_S)$ is infinite and we are done.

Henceforth assume that $\Lambda\subseteq G_S$ is discrete. Since $\PO(4)$ is of type $D_2$ and any split group of type $D_2$ is a product of groups of type $A_1$ (cf.\ \cite{stack}), the group $G_S:=\prod_{s\in S} \bbG(k_s)$ splits as a product of rank one factors, say $G_S = \prod_{i\in I} G_i$. Choose a minimal subset $J\subseteq I$ (possibly $J=I$) such that $\pr_J(\Lambda)\subseteq G_J$ is discrete. We note that $J$ consists of at least two factors. Indeed, since each $G_i$ has rank 1, its Bruhat-Tits building is a tree, and since $\Lambda$ is not virtually free, it cannot act properly discontinuously on a bounded degree tree.

Since $\Lambda\subseteq\bbG(k)$, its $\bbR$-Zariski closure in $\bbG(\bbR)$ is defined over $k$ (see e.g. \cite[Proposition 3.1.8]{zimmer-book}). Since $\Lambda\subseteq\bbG(\bbR)$ is a lattice, it is $\bbR$-Zariski dense in $G(\mathbb{R})$. Combining these two observations, we see that $\Lambda$ is $k$-Zariski dense in $\bbG(k)$. Restricting scalars from $k$ to $\bbQ$, we conclude that $\Lambda$ is $\bbQ$-Zariski dense in $(R_{k/\bbQ}\bbG)(\bbQ)$.

{Let $J_p \subset J$ denote the indices corresponding to
	the $p$-adic factors of $G_J$ and $G_{J_p}$ denote the product of the $p$-adic factors of $G_J$. Then, by restriction of scalars as in the discussion following Definition \ref{dfn:s-irred}, $G_{J_p}$ is a $\bbQ_p$-algebraic group.}

We claim that $\Lambda$ is irreducible in $G_J$. By minimality of $J$, the projection of $\Lambda$ to any proper subset of factors is indiscrete, so it remains to show that $\Lambda$ is Zariski dense in $G_{J_p}(\bbQ_p)$ for every prime $p$.

 Now write $S_p$ for the set of $p$-adic places of $S$. We have
	$$G_{S_p} = \prod_{s\in S_p} \bbG(k_s) = \prod_{s\in S_p} (R_{k/ \bbQ} \bbG)(\bbQ_p).$$
We regard $G_{S_p}$ as a $\bbQ_p$-algebraic group using the product structure given by the right-hand side. Since the diagonal embedding
	$$R_{k/\bbQ}\bbG\hookrightarrow \prod_{s\in S_p} R_{k/\bbQ} \bbG$$
is defined over $\bbQ$ and $\Lambda$ is $\bbQ$-Zariski dense in $(R_{k/\bbQ}\bbG)(\bbQ)$, the $\bbQ$-Zariski closure of $\Lambda$ in $\prod_{s\in S_p} (R_{k/\bbQ} \bbG)(\bbQ)$ is exactly the diagonal. So the $\bbQ_p$-Zariski closure of $\Lambda$ in $G_{S_p}$ contains the diagonally embedded copy $\bbG(k)$. However, since $\bbG$ is adjoint, it satisfies weak approximation, so $\bbG(k)$ is (analytically) dense in $G_{S_p}$
{\cite[Corollary 5.8]{sansuc}}. Hence the $\bbQ_p$-Zariski closure of $\Lambda$ in $G_{S_p}$ (which is of course analytically closed) contains a dense set and therefore must be all of $G_{S_p}$. This shows that $\Lambda$ is $\bbQ_p$-Zariski dense in $G_{S_p}$.

Since every factor of $G_{S_p}$ is defined over $\bbQ_p$, the projection $G_{S_p}\to G_{J_p}$ is also defined over $\bbQ_p$. It follows that the image of $\Lambda$ in $G_{J_p}$ is also $\bbQ_p$-Zariski dense.\end{proof}

\section{Coherence}\label{sec:cohere} {The main result of this section is Theorem~\ref{thm-cohere}, deducing coherence of certain
$S-$arithmetic lattices from the Greenberg-Shalom Hypothesis~\ref{q:shalom}.}
Recall the following definition of coherent groups:
\begin{dfn}\label{def-cohere}
A finitely presented group $\Gamma$ is \emph{coherent} if any finitely generated subgroup of $\Gamma$ is  finitely presented.
\end{dfn}
Free groups and surface groups are coherent, as are abelian and, more generally, polycyclic groups. Scott has shown 3-manifold groups are coherent \cite{scott-cohere}, and since then, coherence has emerged as one of their salient properties. On the other hand, $F_2\times F_2$ is incoherent.
{This was first observed by Baumslag, Boone, and Neumann in \cite{bbn} and this line of inquiry was developed by Stallings \cite{stallings}, cf.\ \cite[Section 9.d]{wise-cohere}.}  Baumslag-Roseblade proved that a subgroup of $F_2\times F_2$ is finitely presented if and only if it is a finite extension of a finite product of finite rank free groups, and that there are many finitely generated subgroups that are not of this form \cite{baumslag-roseblade}.

See \cite{wise-cohere} for a survey of coherent groups, as well as many open problems. The class of coherent groups is not well-understood: For example, Serre famously asked whether SL($2,\bbZ[1/p])$ is coherent \cite{serre-cohere}. Below we give a positive answer assuming the Greenberg-Shalom hypothesis. Note however that $\SL(4,\bbZ)$ is incoherent as it contains the incoherent group $F_2\times F_2$.  Serre also asked if $\SL(3,\bbZ)$ is coherent, and this also remains open.

The relevance of the Greenberg-Shalom hypothesis to coherence of lattices is that by Proposition \ref{prop-irred2latt}, irreducible lattices in products do not have many subgroups; more precisely either a subgroup is not irreducible or it is also a lattice and therefore has finite index. In the following case, we have sufficient control over reducible groups that we can conclude they are finitely presented:

\begin{thm}\label{thm-cohere}
	Assume the Greenberg-Shalom Hypothesis \ref{q:shalom}. Let $S$ be a (nonempty) finite set of places of $\bbQ$ and either assume all places are finite or that there are two places, one infinite and one finite, in $S$. Set $G_s:=\PGL(2,\bbQ_s)$ and $G:= G_S$. Then any $S$-arithmetic lattice $\Lambda\subseteq G$ is coherent.
\end{thm}

\begin{proof} \mbox{}
Let $\Gamma\subseteq\Lambda$ be a finitely generated infinite subgroup. Let $H$ denote the $\Q$-Zariski closure of $\Gamma$.  If $H$ is a proper subgroup then {it must fix some point on $G/P$ since it cannot fix a proper factor. Hence, it is solvable.}
In the case when we have one real and one finite place, this implies that $\Gamma$ is either virtually abelian or virtually contained in a Baumslag-Solitar
group $BS(1,p)$.  In the case where there are no real places, $H$ solvable implies $H$ abelian, since all solvable subgroups of the lattice are virtually abelian. In all cases, $\Gamma$ is finitely presented and we are done.  So we can assume $\Gamma$ is $\Q$-Zariski dense and so all projections
are Zariski dense.

Choose a minimal subset $T\subseteq S$ (possibly $T=S$) such that $\pr_T(\Gamma)\subseteq G_T$ is discrete. Then  $\pr_T(\Gamma)$ is irreducible in $G_T$. If $|T|\geq 2$, then by Proposition \ref{prop-irred2latt}, $\Gamma$ is an irreducible lattice in $G_T$ and hence is finitely presented, and the proof is complete.

It remains to consider the case that $\Gamma$ projects discretely to a factor $G_t$, which is either $\PGL(2,\bbR)$ or $\PGL(2,\bbQ_p)$. In either case, all discrete, finitely generated subgroups are finitely presented: In $\PGL(2,\bbR)$, any such subgroup is a surface group or is virtually free. In $\PGL(2,\bbQ_p)$, every such group is virtually free. \end{proof}

\begin{rem}
It is clear from the proof that something more general can be proven by the same argument.  For example, we can consider number fields other than $\Q$ if we assume
all places are finite, {as in this case, lattices are cocompact and solvable subgroups of $\Lambda$ are abelian}.  A similar argument might work for (some) other number fields if we allow one infinite place and one finite place, {provided the solvable subgroups of $\Lambda$ are coherent.  Thus, the restrictions really stem from the fact that}
 it is known that $\SL(2, \Z[1/n])$ is not coherent if $n$ is composite \cite{serre-cohere}.  It  also clearly follows from the Greenberg-Shalom hypothesis that all finitely generated Zariski dense subgroups of  $\SL(2, \bbQ)$ are finitely presented.
\end{rem}

This also has implications for a fundamental question about coherence, to our knowledge first explicitly posed by Wise \cite[Problem 9.16]{wise-cohere}, namely whether coherence is a geometric (i.e. quasi-isometry invariant) property? We note that Wise already explicitly hedges against this.

\begin{cor}
Assume the Greenberg-Shalom hypothesis. Then coherence is not a quasi-isometry invariant.
\end{cor}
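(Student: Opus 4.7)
The plan is to exhibit two quasi-isometric, finitely presented groups of which only one is coherent under the Greenberg-Shalom hypothesis, thereby contradicting quasi-isometric invariance of coherence.

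For the coherent side I will take a cocompact $S$-arithmetic lattice $\Lambda$ in $G := \PGL(2,\bbQ_p)\times \PGL(2,\bbQ_q)$ for two distinct primes $p,q$. To produce one, fix a quaternion division algebra $B/\bbQ$ ramified at $\infty$ and at a single auxiliary prime $r\notin\{p,q\}$ (possible since one only needs the ramification set to have even size). Let $\bbG := B^\times/Z(B^\times)$, a $\bbQ$-form of $\PGL_2$. Since $B$ splits at $p$ and $q$, one has $\bbG(\bbQ_\ell)\cong \PGL(2,\bbQ_\ell)$ for $\ell\in\{p,q\}$, while $\bbG(\bbR)$ and $\bbG(\bbQ_r)$ are compact. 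By Borel-Harish-Chandra, $\Lambda := \bbG(\bbZ[1/(pq)])$ embeds as a cocompact $\{p,q\}$-arithmetic lattice in $G$, and Theorem \ref{thm-cohere} then yields its coherence under the Greenberg-Shalom hypothesis.

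The incoherent comparison group is $F_2\times F_2$, which is incoherent by the theorem of Baumslag-Roseblade cited at the start of this section. Both $\Lambda$ and $F_2\times F_2$ act geometrically on a product of regular trees of finite valence at least $3$: $\Lambda$ acts cocompactly on the product of Bruhat-Tits trees $T_{p+1}\times T_{q+1}$ associated with its two factors, while $F_2\times F_2$ acts cocompactly on $T_4\times T_4$ via its standard Cayley graph. Since all regular trees of finite valence $\geq 3$ are mutually quasi-isometric, so are their products, and the Milnor-Svarc lemma yields that $\Lambda$ and $F_2\times F_2$ are quasi-isometric, completing the proof.

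I do not anticipate any genuine technical obstacle: the quaternion-algebra construction of cocompact $S$-arithmetic lattices and their resulting action on the product of Bruhat-Tits trees is classical, as is the fact that regular trees of valence $\geq 3$ form a single quasi-isometry class. The conceptual input — coherence of $S$-arithmetic lattices in products of rank-one $p$-adic groups under Greenberg-Shalom — is precisely Theorem \ref{thm-cohere}, so the corollary reduces entirely to assembling these two ingredients.
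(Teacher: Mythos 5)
Your proof is correct and takes essentially the same route as the paper's: the paper also compares an irreducible lattice in $\PGL(2,\bbQ_p)\times\PGL(2,\bbQ_q)$ (coherent by Theorem \ref{thm-cohere}) against a product of free groups (incoherent by Baumslag--Roseblade), and establishes the quasi-isometry by noting that all lattices in this totally disconnected semisimple group are cocompact. The only difference is presentational: where you explicitly build the irreducible lattice from a quaternion algebra and run Milnor--\v{S}varc through the product of Bruhat--Tits trees, the paper simply invokes that $G$ admits both irreducible lattices and reducible lattices of the form $F_m\times F_n$ (coming from lattices in each factor), and that cocompactness of all lattices in $G$ immediately gives the quasi-isometry.
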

\begin{proof}
Let $p,q$ be distinct primes and consider lattices in $G:=\PGL(2,\bbQ_p)\times \PGL(2,\bbQ_q)$. All such lattices are cocompact and hence quasi-isometric. By Theorem \ref{thm-cohere}, the irreducible lattices in $G$ are coherent. On the other hand,  any lattice in $\PGL(2,\bbQ_p)$ or $\PGL(2,\bbQ_q)$ is virtually free (of rank $>1$), so $G$ admits reducible lattices that are products $F_m\times F_n$ of free groups (with $m,n>1$). These are incoherent by the previously mentioned result of Baumslag-Roseblade \cite{baumslag-roseblade}.
\end{proof}

\section{Margulis-Zimmer conjecture}\label{sec:mz} A major motivation for Greenberg-Shalom's question is the following conjecture advertised by Margulis-Zimmer in the late `70s, seeking to classify commensurated subgroups of higher rank lattices $\Lambda$. Here we say $\Gamma\subseteq\Lambda$ is \emph{commensurated} if $\Lambda\subseteq \text{Comm}_G(\Gamma)$.
{The main result of this section is Theorem~\ref{thm-mz}, deducing the Margulis-Zimmer Conjecture~\ref{conj:mz} below for certain
	$S-$arithmetic lattices from the Greenberg-Shalom Hypothesis~\ref{q:shalom}.}

\begin{conj}[{Margulis-Zimmer, see \cite{shalom-willis}}] Let $\G$ be a semisimple algebraic group defined over  a number field $k$ and $S$ a finite set of valuations of $k$. Assume $\G$ has higher $S$-rank. Assume $\Lambda=\G(\mathcal{O}_S)$ is an $S$-arithmetic lattice in $\G$, then any commensurated subgroup of $\Lambda$ is either finite or $S'$-arithmetic for some $S'\subseteq S$.
\label{conj:mz}
\end{conj}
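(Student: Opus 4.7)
The plan is to deduce the conjecture from the Greenberg-Shalom hypothesis (via Proposition \ref{prop-irred2latt}) in the \emph{mixed} case where the commensurated subgroup has unbounded projection to some but not all factors of $G_S$, using density of projections of higher-rank $S$-arithmetic lattices; the \emph{extreme} case of uniformly unbounded projections lies outside the reach of Greenberg-Shalom and must be handled by invoking the classical commensurator-subgroup theorem.

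After passing to a finite-index subgroup (preserving both the commensurated property and the arithmeticity conclusion up to commensurability), I would assume $\Lambda$ is irreducible in $G_S$, splitting into $k$-simple factors otherwise. Let $\Gamma\subseteq\Lambda$ be infinite and commensurated, and set
\[
T:=\{s\in S \mid \overline{\pr_s(\Gamma)} \text{ is not compact in }\G(k_s)\}.
\]
If $T=\varnothing$, then a finite-index subgroup of $\Gamma$ lies in a compact open subgroup of $G_S$; since $\Lambda$ is discrete, $\Gamma$ is finite. If $T=S$, the commensurator $\Lambda$ itself is discrete in $G_S$, so the Greenberg-Shalom hypothesis is not directly applicable; this case falls under the commensurator-subgroup theorem for higher-rank $S$-arithmetic lattices (Margulis, Shalom-Willis), forcing $\Gamma$ to have finite index in $\Lambda$, i.e.\ to be $S$-arithmetic.

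The substantive case is $\varnothing \subsetneq T \subsetneq S$. After a further finite-index reduction, arrange $\pr_{S\setminus T}(\Gamma)\subseteq K$ for some compact open $K\subseteq G_{S\setminus T}$. Then $\pr_T(\Gamma)\subseteq G_T$ is discrete by the standard fact that projection from $G\times K$ with $K$ compact preserves discreteness of subgroups. Commensuration by $\Lambda$ descends: $\pr_T(\Gamma)$ is commensurated by $\pr_T(\Lambda)$, and by strong approximation applied to the higher-rank irreducible lattice $\Lambda$ (equivalently, Margulis's density of projections to proper subproducts), $\pr_T(\Lambda)$ is dense in $G_T$. Strong irreducibility of $\pr_T(\Gamma)$ in $G_T$ follows from Lemma \ref{lemma:zdense-comm} (giving Zariski-density at each place) together with the definition of $T$ (ensuring no projection is relatively compact, ruling out further discreteness). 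Applying Proposition \ref{prop-irred2latt} then identifies $\pr_T(\Gamma)$ as an arithmetic lattice in $G_T$. Since $\Gamma\subseteq\Lambda\subseteq\G(k)$ and $\pr_T$ is the diagonal embedding, the resulting arithmetic structure matches that of $\G(\calO_T)$ up to commensurability and $k$-inner twisting, so $\Gamma$ is $T$-arithmetic.

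The principal obstacle is the case $T=S$, which is not approachable via Greenberg-Shalom and requires an independent commensurator superrigidity input for higher-rank $S$-arithmetic lattices. A secondary technical issue is the density of $\pr_T(\Lambda)$ in $G_T$ when $\G$ is adjoint rather than simply connected; this is handled by lifting $\Lambda$ to a simply connected cover, invoking strong approximation there, and descending via the adjoint quotient, in the same style as the proof of Theorem \ref{thm:3manifold}.
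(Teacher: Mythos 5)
The statement you are proving is labeled a \emph{Conjecture} in the paper: the authors do not prove it, and it remains open. What the paper does establish, assuming Greenberg--Shalom, is the special case stated as Theorem~\ref{thm-mz}, which carries two additional hypotheses ($|I|\geq 2$ and $G_{\mathrm{na}}$ nontrivial) and a slightly weaker conclusion (that $\pr_J(\Gamma)$ is a lattice in $G_J$ for some $J$, without the full $S'$-arithmetic classification, which requires a further Venkataramana-type step). Your proposal aims at the full conjecture, so it must be measured against that target.

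The fatal gap is your treatment of $T = S$. You write that this case ``falls under the commensurator-subgroup theorem for higher-rank $S$-arithmetic lattices (Margulis, Shalom--Willis), forcing $\Gamma$ to have finite index.'' No such general theorem exists. Margulis's Normal Subgroup Theorem classifies \emph{normal} subgroups of higher-rank lattices, not commensurated ones. Shalom--Willis prove the Margulis--Zimmer conjecture only for certain groups under a bounded-generation-by-unipotents hypothesis, and the paper explicitly notes that bounded generation fails for many higher rank lattices, so their method is not available in general. The assertion you are invoking is essentially the Margulis--Zimmer conjecture itself (restricted to $T=S$), so the appeal is circular. Moreover, you have also missed that Greenberg--Shalom \emph{is} applicable inside the $T=S$ case when $\Gamma$ is irreducible: this is exactly the content of Proposition~\ref{prop-irred2latt}, which intersects $\Gamma$ with the preimage of a compact open subgroup of $G_{\mathrm{na}}$ to produce a discrete subgroup of $G_\infty$ (or of a chosen nonarchimedean factor) whose commensurator is dense, and then applies Greenberg--Shalom there. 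This is why the paper's Theorem~\ref{thm-mz} requires a nonarchimedean factor: without one, only approximate subgroups arise from this slicing, and the hypothesis would need the strengthening discussed after Question~\ref{q:approx}.

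The paper's proof of Theorem~\ref{thm-mz} organizes cases by the irreducible/reducible dichotomy rather than by your set $T$ of places with unbounded projection, and this is what allows Proposition~\ref{prop-irred2latt} to absorb the $T=S$, $\Gamma$ irreducible subcase. Your handling of $\varnothing \subsetneq T \subsetneq S$ (slice by a compact open subgroup of $G_{S\setminus T}$, use density of $\pr_T(\Lambda)$, apply Greenberg--Shalom to $\pr_T(\Gamma)$) is close in spirit to the paper's reducible case, where a minimal $J$ with $\pr_J(\Gamma)$ discrete is chosen and Greenberg--Shalom applied to $\pr_J(\Gamma)\subseteq G_J$. But the two decompositions are not identical: $T$ tracks boundedness while the paper's $J$ tracks discreteness, and with $T=S$ it can still happen that $\pr_J(\Gamma)$ is discrete for some proper $J$, a configuration your case analysis misassigns to the unresolved bucket. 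Finally, your last paragraph gestures at recovering $T$-arithmeticity ``up to commensurability and $k$-inner twisting,'' but this needs the intermediate-subgroup result of Venkataramana (cited in the paper as \cite[Prop.~2.3]{lubotzky-zimmer}) rather than following immediately; the paper's Theorem~\ref{thm-mz} deliberately stops short of that conclusion.
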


Here, the $S$\emph{-rank} of $\G$ is the sum of the $k_{\nu}$-ranks over all valuations $\nu\in S$, and $\G$ is said to have \emph{higher }$S$\emph{-rank} if its $S$-rank is at least 2.

\begin{rem} \mbox{}\label{rmk:mz}
	\begin{enumerate}[(1)]
		\item For example, if $\Gamma$ is a commensurated subgroup of $\SL(n,\bbZ[1/p])$ (where $n\geq 2$), then $\Gamma$ is predicted to be either finite, finite-index, or commensurable to $\SL(n,\bbZ)$.
		\item Conjecture \ref{conj:mz} is motivated by and strengthens Margulis' Normal Subgroup Theorem.
		\item Question \ref{q:shalom} and Conjecture \ref{conj:mz} are closely related: For example, if $\Gamma\subseteq \G(\mathcal{O}_S)$ is a commensurated subgroup and $\Gamma\cap \G(\mathcal{O})$ is infinite, then a positive answer to Question \ref{q:shalom} proves $\Gamma$ intersects $\G(\mathcal{O})$ in a lattice. $S'$-arithmeticity of $\Gamma$ then follows from Venkataramana's result (see \cite[Proposition 2.3]{lubotzky-zimmer}) that the only intermediate subgroups between $\G(\mathcal{O})$ and $\G(\mathcal{O}_S)$ are $S'$-arithmetic for some $S'\subseteq S$, which proves Conjecture \ref{conj:mz} for such groups.
	\end{enumerate}
\end{rem}

Using well-chosen unipotent generating sets, Venkataramana has proven the Margulis-Zimmer Conjecture for arithmetic lattices $\Gamma=\G(\bbZ)$ in simple groups defined over $\bbQ$ \cite{venkataramana-mz}. Shalom-Willis have proved Conjecture \ref{conj:mz} in more instances, including the first that are not simple \cite{shalom-willis}. Their proof crucially relies on fine arithmetic properties for lattices in these groups, namely bounded generation by unipotents. As it is now known that bounded generation is not a common property for higher rank lattices \cite{CRRZ}, different approaches are needed. The results in \cite{f-mj-vl} yield partial results on this conjecture and the first that do not depend on unipotent elements, see Corollary $1.5$ in that paper.  This summarizes all prior work on Conjecture \ref{conj:mz}.

We now deduce a special case of Conjecture \ref{conj:mz} assuming the Greenberg-Shalom hypothesis.  Namely we deduce the case where $G$ has at least two factors,
at least one of which is non-archimedean. The deduction of this case of the Margulis-Zimmer conjecture from the Greenberg-Shalom hypothesis is quite simple modulo arguments we have already made. The existence of at least two factors is essential to the argument, but the requirement of a non-archimedean one can be removed if the Greenberg-Shalom hypothesis is strengthened to cover approximate groups (see Question \ref{q:approx}), but we do not pursue this here.

Let $G = \prod_I G_i$ be as in \ref{assumptions}. {We denote} the rank of $G$ by $\rk(G) = \sum_{i \in I} k_{i}$-$\rk(G_i(k_i))$ and assume that $\rk(G)\geq 2$.  Let $G_{\text{na}}$ denote the product
of $G_i$'s over all non-archimedean factors. We establish the following case of
Conjecture \ref{conj:mz}.

\begin{thm}\label{thm-mz} Assume the Greenberg-Shalom hypothesis and suppose $G_{\text{na}}$ is non-trivial and $|I|\geq 2$.
Let $\Lambda$ denote an irreducible lattice in $G$, and $\Gamma \subset \Lambda$ be an infinite subgroup commensurated by $\Lambda$. Then there exists $J \subset I$ such that
$\pr_J(\Gamma)$ (isomorphic to $\Gamma$ under $\pr_J$) is a lattice in $G_J$.
\end{thm}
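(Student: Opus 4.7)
The strategy is to adapt the proof of Proposition \ref{prop-irred2latt} to the commensurated-subgroup setting. Following that proof, decompose $G = G_\infty \times G_{\text{na}}$ where $G_{\text{na}}$ is the (non-trivial) product of the non-archimedean factors and $G_\infty$ is either the product of the archimedean factors (when present) or, if there are none, one chosen non-archimedean factor with the others grouped into $G_{\text{na}}$; the assumption $|I|\geq 2$ ensures both $G_\infty$ and $G_{\text{na}}$ are non-trivial. Let $K\subseteq G_{\text{na}}^+$ be a maximal compact open subgroup, and set $\Gamma_K := \Gamma \cap (G_\infty \times K)$. A direct computation, using normality of $G_\infty \subseteq G$ and that $\lambda_{\text{na}} K \lambda_{\text{na}}^{-1}$ is commensurable with $K$ for all $\lambda_{\text{na}}\in G_{\text{na}}$ (both being compact open), shows that $\Lambda$ commensurates $\Gamma_K$.

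Now I would split into two cases. If $\Gamma_K$ is infinite, then $\pr_\infty(\Gamma_K) \subseteq G_\infty$ is infinite and discrete (by compactness of $K$), and its commensurator in $G_\infty$ contains the dense subgroup $\pr_\infty(\Lambda)$ (dense by irreducibility of $\Lambda$, using $|I|\geq 2$). By Lemma \ref{lemma:zdense-comm}, $\pr_\infty(\Gamma_K)$ is Zariski-dense in some subproduct $G_{J_0} \subseteq G_\infty$, and the Greenberg-Shalom Hypothesis \ref{q:shalom} then implies it is an arithmetic lattice in $G_{J_0}$. A Venkataramana-type argument, modeled on the end of the proof of Proposition \ref{prop-irred2latt}, then extends this to show $\pr_J(\Gamma)$ is a lattice in $G_J$ for an appropriate $J \supseteq J_0$. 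If instead $\Gamma_K$ is finite, then discreteness of $\Gamma$ together with compactness of $K$ forces $\pr_{\text{na}}|_\Gamma$ to have finite kernel (contained in $\Gamma_K$) and discrete image in $G_{\text{na}}$; passing to a finite-index subgroup eliminates the kernel, and the image $\pr_{\text{na}}(\Gamma)$ is a discrete subgroup of $G_{\text{na}}$ whose commensurator contains the dense subgroup $\pr_{\text{na}}(\Lambda)$. Applying Lemma \ref{lemma:zdense-comm} and the Greenberg-Shalom Hypothesis \ref{q:shalom} again gives that $\pr_{\text{na}}(\Gamma)$ is an arithmetic lattice in a subproduct of $G_{\text{na}}$, providing the required $J \subseteq I_{\text{na}}$.

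The main obstacle is the Venkataramana-style extension step in the first case. In the irreducible setting of Proposition \ref{prop-irred2latt}, one critically uses density of $\pr_{\text{na}}(\Gamma)$ itself in $G_{\text{na}}^+$ to exhibit $F \times K$ as a fundamental domain for $\Gamma$. Here we only know density of $\pr_{\text{na}}(\Lambda)$, and $\pr_{\text{na}}(\Gamma)$ can a priori be bounded, dense in a proper subproduct, or anywhere in between. The correct choice of $J$ must reflect this: one should include in $J$ precisely the non-archimedean factors where $\pr_{\text{na}}(\Gamma)$ contributes genuinely to a lattice structure (and exclude those where it is absorbed into a compact open), and then verify, via careful bookkeeping of fundamental domains and compact-open parts, that the resulting discrete subgroup of $G_J$ has finite covolume.
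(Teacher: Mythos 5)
The proposal has a genuine gap and is also structurally much more complicated than what is needed; the paper's own proof is far more direct. The paper simply observes: if $\Gamma$ is irreducible in $G$, Proposition~\ref{prop-irred2latt} already gives the conclusion with $J=I$; if not, choose a minimal proper $J\subsetneq I$ with $\pr_J(\Gamma)$ discrete, note that $\pr_J(\Lambda)$ is dense in $G_J$ by irreducibility of the lattice $\Lambda$ and that $\pr_J$ is injective on $\Lambda$ (hence on $\Gamma$), and apply the Greenberg--Shalom Hypothesis directly to the discrete, Zariski-dense subgroup $\pr_J(\Gamma)$ with almost dense commensurator $\pr_J(\Lambda)$. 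No Venkataramana-style gluing is needed because the theorem only asserts that \emph{some} projection $\pr_J(\Gamma)$ is a lattice.

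Your Case~2 ($\Gamma_K$ finite) is essentially a special instance of the paper's reducible case (with $J=I_{\text{na}}$, modulo the subproduct adjustment via Lemma~\ref{lemma:zdense-comm}) and is fine. But Case~1 ($\Gamma_K$ infinite) does not close. You correctly identify the obstacle: the Venkataramana fundamental-domain argument in Proposition~\ref{prop-irred2latt} uses that $\pr_{\text{na}}(\Gamma)$ is dense in $G_{\text{na}}^+$, and here you only control the density of $\pr_{\text{na}}(\Lambda)$, not of $\pr_{\text{na}}(\Gamma)$. Moreover, the conclusion you obtain in Case~1 is about $\Gamma_K$, not $\Gamma$: you get that $\pr_\infty(\Gamma_K)$ is a lattice in some archimedean subproduct $G_{J_0}$, but $\pr_{J_0}(\Gamma)$ itself is generically \emph{not} discrete (it contains the lattice $\pr_{J_0}(\Gamma_K)$ as an infinite-index subgroup whenever $\Gamma \ne \Gamma_K$), so $J_0$ does not serve as the required $J$ for $\Gamma$. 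The ``careful bookkeeping'' you gesture at is precisely the missing content, and there is no indication it can be done: what rescues the situation is to abandon the $G_\infty \times G_{\text{na}}$ decomposition entirely and instead pick the minimal $J$ for which $\pr_J(\Gamma)$ is already discrete, as the paper does. In short, the dichotomy you should split on is \emph{irreducible vs.\ reducible for $\Gamma$ in $G$} (which hands you the correct $J$ immediately), not \emph{$\Gamma_K$ finite vs.\ infinite}.
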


\begin{proof}
If $\Gamma$ is irreducible, it is a lattice by Proposition \ref{prop-irred2latt}.
	
Assume therefore that $\Gamma$ is reducible. Hence there exists a minimal subset $J \subsetneq I$   such that $\pr_J(\Gamma)$ is discrete. Irreducibility of $\Lambda$ implies that  $\pr_J(\Lambda)$ is dense in $G_J$. Note also that  $\pr_J$ is injective on $\Lambda$, and hence on $\Gamma$. Thus, $\pr_J(\Gamma)\subset G_J$ is an infinite discrete subgroup commensurated by $\pr_J(\Lambda)$, where the latter is almost dense.  Then the Greenberg-Shalom hypothesis implies that $\pr_J(\Gamma)\subset \pr_J(G)$ is a lattice.
\end{proof}

One might want to reverse this implication, but it is not immediate.  If one assumes that $\Gamma<G$ as in Greenberg-Shalom's Question \ref{q:shalom} is contained in an arithmetic lattice $\bbG(\calO)\subseteq G$ (where $\calO$ is the ring of integers of a number field $k$), then it follows from an old argument of Borel that the commensurator $\Lambda:=\Comm_G(\Gamma)$ is contained in  $\bbG(k)$  see \cite{borel-comm} or \cite[Prop. 6.2.2]{zimmer-book}. (In \cite[Prop. 6.2.2]{zimmer-book} this is stated for arithmetic lattices, but the only property used is that $\Gamma$ is a Zariski dense subgroup of an arithmetic lattice). Then $\Lambda$ contains finitely generated subgroups that are dense in $G$ and contained in $S$-arithmetic lattices $\bbG(\calO_S)$, where $S$ is a finite set of places of $k$.  It is, however, not at all clear
in general how to force $\Lambda$ to contain an $S$-arithmetic lattice without already proving $\Gamma$ is a lattice. In this sense, the Greenberg-Shalom hypothesis is a
strengthening of a special case of the Margulis-Zimmer conjecture.

\section{Automorphism groups of trees}
\label{sec:flssimproved}

Going beyond semisimple Lie groups, one can study discrete irreducible subgroups of automorphism groups of products of trees. Their irreducible lattices are known to exhibit rigidity by the work of Burger-Mozes \cite{burger-mozes1,burger-mozes2}.
In this setting we have the following questions due to Fisher-Larsen-Spatzier-Stover:

\begin{question}[{Fisher-Larsen-Spatzier-Stover \cite{FLSS}}] \mbox{}Let $\Gamma$ be a surface group of genus $g\geq 2$.
	\begin{enumerate}[(1)]
		\item Does there exist $\rho: \Gamma \rightarrow \Aut(T_1 \times \cdots \times T_k)$ with discrete image?
		\item  Does there exist $\rho$ as in $(1)$ where $\rho$ takes values in a product $G_1 \times \cdots \times G_k$
		where each $G_i$ is a rank one simple algebraic group over a non-archimedean local field?
		\item Can $\Gamma$ be faithfully represented into $\PGL(2,K)$ for some global field $K$ of positive characteristic?
	\end{enumerate}
	\label{q:flss}
\end{question}

\noindent If there is an action as in $(1)$, then on a subgroup of finite index, $\rho$ is the diagonal embedding from homomorphisms $\rho_i: \Gamma \rightarrow \Aut(T_i)$. Question \ref{q:flss}$(2)$ is only implicit in \cite{FLSS}. In \cite{FLSS}, it is shown a positive answer to the third question gives a positive answer to the first and second. Questions \ref{q:flss}$(1)$ and $(2)$ are clearly related to Corollary \ref{noirreduciblesurfacesinpadics}.    We remark here that a negative answer to the analogue of Greenberg-Shalom's question for automorphism groups of trees is given by Burger and Mozes in \cite[Proposition 8.1]{burger-mozes0}, and that one can also construct irreducible subgroups in products of trees as done by e.g. the third author and Huang in \cite{huang-mj}. However, Question \ref{q:flss}$(1)$ is still open as is the corresponding question
for finitely generated free groups.

In this section we shall first prove an improvement of \cite[Theorem 15]{FLSS} and use it to prove Corollary \ref{noirreduciblesurfacesinpadics} and from that Corollary
\ref{cor:algebraicsurfacegroups}.
The improvement is minor and the main ideas are present in \cite{FLSS} but we need a stronger statement than given there and expect it to
be needed in other applications. {For the purposes of this section, we shall use the following terminology. Let $T$ denote a tree, and
	$\partial T$ its Gromov boundary. Then any $p \in \partial T$ will be referred to as a \emph{point at infinity} of $T$.}
\subsection{On a theorem from \cite{FLSS}}\label{subsec-flss}

\begin{thm}\label{thm:flssimproved}
Suppose that $\Lambda$ is a torsion free hyperbolic group that is not free. Let
\[
X = T_1 \times \cdots \times T_n
\]
be a product of finite-valence trees, set $G_i := \mathrm{Aut}(T_i)$, and $G := \prod G_i$. Let $\pr_i$ denote the projection of $G$ onto $G_i$. If $\rho : \Lambda \to G$ is a discrete and faithful representation, then there are at least two values of $i$ such that $\rho_i := \pr_i \circ \rho$ is faithful and has indiscrete image. Moreover, suppose $\rho_1, \dots, \rho_r$ are faithful representations and the other $\rho_i$ are not. Then the representation
\[
\rho_1 \times \cdots \times \rho_r : \Lambda \to G_1 \times \cdots \times G_r
\]
is discrete and faithful.  If we further assume $\rho_1 \times \cdots \times \rho_r$ is minimal for the property of having discrete image, then for all $i$, $\rho_i(\Lambda)$ does not fix a  point at infinity on $T_i$.
\end{thm}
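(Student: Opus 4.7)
I plan to prove the final statement by contradiction. Suppose $\rho_i(\Lambda)$ fixes some end $\xi \in \partial T_i$ for some $i \leq r$, and show that $\prod_{j \neq i,\, j \leq r} \rho_j : \Lambda \to \prod_{j \neq i} G_j$ is still discrete, contradicting the minimality hypothesis. The main tool is the Busemann (translation) cocycle $\beta : G_{i,\xi} \to \bbZ$, yielding a homomorphism $\beta_i := \beta \circ \rho_i : \Lambda \to \bbZ$. The argument splits according to whether $\beta_i$ is identically zero (``elliptic case'') or not (``hyperbolic case'').

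In the elliptic case $\beta_i \equiv 0$, every element of $\rho_i(\Lambda)$ is elliptic on $T_i$ and fixes $\xi$, hence fixes pointwise a terminal subray of some ray to $\xi$. Fixing a finite generating set of $\Lambda$, the finitely many terminal subrays attached to the generators share a common infinite tail, so $\rho_i(\Lambda)$ has a common fixed vertex deep in this tail and is contained in a compact vertex stabilizer of $G_i$. A standard convergent-subsequence argument then yields discreteness of $\prod_{j \neq i} \rho_j$: given distinct $\gamma_n \in \Lambda$ with $\rho_j(\gamma_n) \to e$ for all $j \neq i$, extract a subsequence with $\rho_i(\gamma_n) \to g \in \overline{\rho_i(\Lambda)}$; then $\rho(\gamma_n) \to (e,\dots,g,\dots,e)$ in $\prod_{j=1}^r G_j$, and the discreteness and faithfulness of $\rho$ force the $\gamma_n$ to be eventually constant, a contradiction.

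In the hyperbolic case $\beta_i \not\equiv 0$, given distinct $\gamma_n \in \Lambda$ with $\rho_j(\gamma_n) \to e$ for $j \neq i$, the integers $\beta_i(\gamma_n)$ are either bounded or unbounded. If bounded, pass to a subsequence with $\beta_i(\gamma_n) \equiv c$ constant and set $\delta_n := \gamma_n \gamma_{n+1}^{-1}$; then $\delta_n \in \ker \beta_i$, so $\rho_i(\delta_n)$ is elliptic and fixes $\xi$, while $\rho_j(\delta_n) \to e$ for $j \neq i$, reducing to the elliptic analysis applied to the subgroup generated by such $\delta_n$. If $|\beta_i(\gamma_n)| \to \infty$, one seeks an element $h \in \Lambda$ with $\beta_i(h) \neq 0$ whose images $\rho_j(h)$ are elliptic in $G_j$ for every $j \neq i$; writing $\beta_i(\gamma_n) = q_n \beta_i(h) + r_n$ with $r_n$ bounded and setting $\gamma_n' := \gamma_n h^{-q_n}$, one passes to a subsequence on which $\rho_j(h)^{q_n} \to e$ in $G_j$ for each $j \neq i$ (possible since $\rho_j(h)$ lies in a compact monothetic subgroup whose cyclic orbit is recurrent at the identity), yielding $\rho_j(\gamma_n') \to e$ and $\beta_i(\gamma_n')$ bounded, and reducing to the previous subcase.

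The main obstacle is producing the mixed element $h$ in the unbounded-Busemann subcase: it must satisfy $\beta_i(h) \neq 0$ yet act elliptically on every $T_j$ for $j \neq i$. The plan is to start with any $h_0 \in \Lambda$ with $\beta_i(h_0) > 0$ and modify it by suitable elements of $\ker \beta_i$ whose projections under each $\rho_j$ are near the identity (available by the indiscreteness of each $\rho_j$), possibly combined with a ping-pong argument inside the hyperbolic subgroup of $\rho_i(\Lambda)$ stabilizing $\xi$, in the spirit of the Schottky constructions in the original proof of \cite[Theorem 15]{FLSS}. Verifying this construction is the technical heart of the argument and closes the reduction, contradicting minimality.
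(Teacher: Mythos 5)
Your argument shares the paper's starting point (the Busemann homomorphism $B_\eta : \Lambda \to \bbZ$ attached to the fixed end), but then diverges into a case analysis where the paper uses a short commutator trick. The elliptic case and the bounded subcase of the hyperbolic case are essentially fine, though for the bounded subcase it is cleaner not to invoke ``the elliptic analysis on the subgroup generated by the $\delta_n$'' (that subgroup need not be finitely generated, so the finite-generating-set argument does not apply to it directly) and instead take a single nontrivial $\delta_n$ with $n$ large: it is elliptic in every factor (in $T_i$ because $\ker B_\eta$ contains no parabolics, in each $T_j$ with $j\neq i$ because $\rho_j(\delta_n)$ is near the identity), so $\langle\delta_n\rangle$ is an infinite cyclic subgroup of $\Lambda$ with bounded image in $G_1\times\cdots\times G_r$, contradicting discreteness.

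The genuine gap is the unbounded subcase. The mixed element $h$ with $B_\eta(h)\neq 0$ and $\rho_j(h)$ elliptic for all $j\neq i$ cannot be produced by the route you propose: you want to modify $h_0$ by elements of $\ker B_\eta$ whose $\rho_j$-projections are simultaneously near the identity for all $j\neq i$, but the existence of such elements is exactly what the bounded subcase supplies, so assuming them in the unbounded subcase is circular --- the unbounded subcase is precisely when they may fail to exist. Moreover, multiplying $h_0$ on the right by such a small element barely perturbs $\rho_j(h_0)$, so a hyperbolic $\rho_j(h_0)$ stays hyperbolic; the proposed ping-pong variant does not obviously repair this. The paper sidesteps the issue entirely: it fixes a vertex $v$ of $T_2\times\cdots\times T_r$ whose $\Lambda$-stabilizer $\Delta$ (under $\rho_2\times\cdots\times\rho_r$) is infinite (the content of indiscreteness), takes $x\in K:=\ker B_\eta$ and $y\in\Delta$, and uses that $[x,y^n]\in K$ by normality of $K$ while the orbit $y^n x^{-1}\cdot v$ lies in a fixed finite ball (finite valence plus $y\cdot v=v$), so pigeonhole gives $n\neq 0$ with $[x,y^n]\in K\cap\Delta$; a nontrivial such commutator (which exists since $\Lambda$ is not virtually cyclic, being torsion-free, hyperbolic, and not free) generates an infinite bounded cyclic subgroup, the desired contradiction. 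In short, normality of $K$ together with compactness of vertex stabilizers manufactures automatically the mixed element you were trying to build by hand, and that commutator idea is what is missing from your proposal.
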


\begin{proof}
The only new statement is the last one concerning no fixed points at infinity. Assume the action on one tree $T_1$ fixes a point at infinity, that all $\rho_i$ are faithful, that $r>1$ and that $(\rho_2 \times \cdots \times \rho_r)(\Lambda)$ is not discrete.  If we fix a point $\eta$ at infinity on $T_1$, we have a height, or Busemann, function $b_{\eta}: T_1 \rightarrow \bbZ$.  It is easy and standard that if $\rho_1(\Lambda)$ fixes $\eta$ and $x_0\in T_1$ is chosen such that $b_\eta(x_0)=0$,  then the map \begin{align*}
    B_\eta: &\Lambda \longrightarrow \bbZ\\
            &\lambda \longmapsto b_\eta(\lambda x_0)
\end{align*}
is a homomorphism.

Let $K = \ker(B_{\eta})$. Observe that since $G_1$ has no parabolic elements, any $\lambda \in K$ fixes a point in $T_1$.
Now fix a vertex $v$ in $T_2 \times \cdots \times T_r$ and let $\Delta$ be the stabilizer of $v$ in $\Lambda$ under the action defined by $\rho_2 \times \cdots \times \rho_r$.  Note $\Delta$ is non-trivial by hypothesis.

It suffices to show that $K \cap \Delta$ is nonempty and this is done almost exactly as in \cite{FLSS}.  Consider $x \in K$ and $y \in \Delta$.  As $K$ is normal in $\Lambda$,
the commutators $[x,y^n]=x(y^n x^{-n}y^{-n})$ belong to  $K$ for any $n\in\bbZ$.  It suffices to find values of $n$ such that the commutator also belongs to $\Delta$.  As in \cite{FLSS}, we see that
since $y$ fixes $v$, the points $y^n x^{-1} y^{-n} \cdot v =   y^n x^{-1} \cdot v$ all lie in a ball centered at $v$ of radius $d(v, x^{-1} \cdot v)$.  Since all trees are finite valence, this ball is a finite set and so there are $n_1 \neq n_2 \in \bbZ$ such that $y^{n_1} x^{-1} \cdot v  = y^{n_2} x^{-1}  \cdot v$  this implies $x y^{n_1-n_2} x^{-1} v =v$, so that $[x, y^{n_1-n_2}]\in \Delta$.  This contradicts discreteness of $(\rho_1 \times \cdots \times \rho_r)(\Lambda)$.
\end{proof}

{ 
\subsection{Proof of Corollaries~\ref{noirreduciblesurfacesinpadics} and
	~\ref{cor:algebraicsurfacegroups}}\label{subsec-cors4}
	
	With Theorem~\ref{thm:flssimproved} in place, we are finally in a position
	to prove Corollary~\ref{noirreduciblesurfacesinpadics}, whose proof we had postponed.}

\begin{proof}[Proof of Corollary~\ref{noirreduciblesurfacesinpadics}:]
	The first statement follows e.g. from the fact that a surface group $\Lambda$ can only be a lattice in a semisimple algebraic group if the group is locally isomorphic
	to $\PSL(2,\bbR)$ and that a finitely generated free group $F_k$ is only a lattice in a simple algebraic group of rank one over a non-archimedean field or  a group locally
	isomorphic to $\PSL(2,\bbR)$.
	
	For the second point, we have a surface group $\Lambda<G$ where $G= \prod_{i \in I} G_i$ where each $G_i$ is a rank one $p_i$-adic group whose
	Bruhat-Tits building is a tree. After some initial reductions, we will argue that $\Lambda$ is irreducible. First, by
{	Theorem~\ref{thm:flssimproved} (or \cite[Theorem 15]{FLSS})}, there is a subset $J \subset I$ such that the projection of $\Lambda$ to each $G_j$ is faithful and indiscrete where $\Lambda < G_J = \prod_{j \in J} G_j$ is discrete and $|J|\geq 2$.  We fix a subset $J$ that is minimal with respect to these properties.
	
	For $j\in J$, the $\bbQ_{p_j}$-Zariski closure of the projection of $\Lambda$ to $G_j$ is simple. Otherwise it would, up to finite index, fix a point at infinity, but {Theorem~\ref{thm:flssimproved}} shows that the projection of finite index subgroups of $\Lambda$ to each $G_j$ do not fix a {point} at infinity.
	
	We now replace every $G_j$ by the $\bbQ_{p_j}$-Zariski closure of the projection of $\Lambda$, and argue that $\Lambda\subseteq G_J$ is irreducible. Henceforth we will regard $G_j$ as a $\bbQ_{p_j}$-algebraic group, and omit the field when referring to its Zariski topology.
	
	Recall that for each prime $p$, the collection of $p$-adic factors of $G_J$ is indexed by $J_p$, and we write $G_p:=\prod_{j\in J_p} G_j$ for the product of the $p$-adic factors of $G_J$. By minimality of $J$, for every
	{$J'\subsetneq J$}, the projection of $\Lambda$ to $G_{J'}$ is indiscrete.
	
	Therefore it remains to show that for every prime $p$, the projection of $\Lambda$ to $G_p$ is Zariski dense. Let $H_p$ be the Zariski closure of the projection of $\Lambda$, and let $J_p'\subseteq J_p$ be those values of $j$ such that $G_j\subseteq H_p$. We will argue by contradiction that $J_p'=J_p$, and this will complete the proof.
	
	Suppose therefore that $J_p'$ is a proper subset of $J_p$.
	{Let $\overline{H_p} = H_p/G_{J_p'}$.}
	Consider now the image of { $\overline{H_p}$} in the product of the remaining factors $G_{J_p}/G_{J_p'}$. Since for every $j\in J_p$, the projection of $\Lambda$ to $G_j$ is Zariski dense, the image of { $\overline{H_p}$} surjects onto (but does not contain) $G_j$ for $j\in J_p\bs J_p'$. In particular, $G_{J_p}/G_{J_p'}$
	consists of at least two factors.
	
	Fix one such factor $G_{j_0}$. Since the kernel of the projection of { $\overline{H_p}$} to $G_{j_0}$ would be normal in the remaining factors, it is a product of some subset of them. But since { $\overline{H_p}$} does not contain any factors of $G_{J_p}/G_{J_p'}$, we conclude that { $\overline{H_p}$} projects isomorphically onto $G_{j_0}$.
	
{	Therefore the projection
$$\overline{H_p}\times G_{J_p'}\times G_J/G_{J_p} \to G_{j_0}\times G_{J_p'}\times G_J/G_{J_p}$$
 that replaces
  $\overline{H_p}$} by $G_{j_0}$ is a topological isomorphism. Since (the projection of) $\Lambda$ is discrete in the former, it is also discrete in the latter. However, since $J_p\bs J_p'$ consists of at least two factors, this contradicts minimality of $J$.

\end{proof}

{
With Corollary~\ref{noirreduciblesurfacesinpadics} in place, we now
complete the proof of Corollary~\ref{cor:algebraicsurfacegroups}.}
	{Note first that since $\Lambda$ is a surface group, it is torsion-free. Hence so is $\Gamma$. In particular, $\Gamma$ cannot be a non-trivial finite group.}

\begin{proof}[Proof of Corollary~\ref{cor:algebraicsurfacegroups}:]
	As $\Lambda$ is finitely generated, there are at most finitely many finite places $S$ of $k$ such that there is an element of $\Lambda$ of
	norm greater than one in the valuation.  Viewing $\Lambda$ as a subgroup of $G :=\prod_{s \in S} \PSL(2,k_s)$, we see that $\Gamma$ is the intersection
	of $\Lambda$ with a compact open subgroup in $G$. It follows immediately that $\Lambda$
	commensurates $\Gamma$ and that if $\Gamma$ is infinite, it is Zariski dense (by Lemma \ref{lemma:zdense-comm} and minimality of $k$). So it remains to show $\Gamma$ is infinite.
	Corollary \ref{noirreduciblesurfacesinpadics} implies that $\Lambda$ is not discrete in $G$ and so $\Gamma$ is infinite.
\end{proof}

{ 
\begin{rem}
Note that in the above proof, we allow for the possibility that the image of $\Lambda$ is bounded. In this case, $\Gamma$ is of finite index in $\Lambda$.
Indeed, if  $\Lambda$ itself is arithmetic, this is exactly what the above proof furnishes.
In this case, $\Gamma$ is also arithmetic.
	\end{rem}

\begin{rem}
For each of the places $s$, there is a compact open subgroup $Q_s \subset
\PSL(2,k_s)$. Also, if $\pr_s$ denotes the projection of $G$ onto $\PSL(2,k_s)$, let $\Gamma_s = \Lambda\cap \pr_s^{-1}(Q_s)$. Then $\Gamma = \cap_s \Gamma_s$. The subgroup  $\Gamma_s $ detects density at $s$. What the
proof of Corollary~\ref{cor:algebraicsurfacegroups} in fact shows is that
$\Gamma$ detects density simultaneously at all $s$.
	\end{rem}}

\section{Rank gradient and small generating sets} \label{sec:rk}

For a finitely generated group $\Gamma$, the \emph{rank} $\rk(\Gamma)$ is the minimal number of generators in a generating set for $\Gamma$. It is interesting to consider ranks of finite index subgroups of $\Gamma$. Writing $\Gamma$ as a quotient of a free group $F_r$ of rank $r:=\rk(\Gamma)$, the pre-image of a finite index subgroup $\Theta\subseteq \Gamma$ in $F_r$ is a free subgroup of index $[\Gamma:\Theta]$, and therefore has rank $[\Gamma:\Theta](\rk(\Gamma)-1)+1$. In particular,
	\begin{equation}
	\rk(\Theta)-1\leq (\rk(\Gamma)-1)[\Gamma:\Theta],
	\label{eq:rk-trivialupper}
	\end{equation}
so $\rk(\Theta)-1$ grows at most linearly in $[\Gamma:\Theta]$. Given a chain of finite index subgroups
	$$\Gamma = \Gamma_0\supseteq \Gamma_1 \supseteq \Gamma_2 \supseteq \dots,$$
the \emph{rank gradient} of $\Gamma$ with respect to the chain $(\Gamma_n)_n$ is
\[\rg(\Gamma, (\Gamma_n)_n):=\lim_{n\to\infty} \frac{\rk(\Gamma_n)-1}{[\Gamma:\Gamma_n]}.\]

In the context of lattices in semisimple Lie groups it seems possible that all statements about
rank gradient are actually consequences of a stronger property.

\begin{dfn}
We say $\Gamma$ has \emph{arbitrarily small $k$ generated finite index subgroups} if given any finite index subgroup
$\Gamma'<\Gamma$ there is a further finite index subgroup $\Gamma''$ that has a $k$ element
generating set.
\end{dfn}

In this section we explain some history of the notions mentioned above, some results on them and their connections
to both the Greenberg-Shalom hypothesis and one another.

{The main theorem of this section is Theorem~\ref{thm:rk}, which asserts that
	assuming  the Greenberg-Shalom hypothesis, the following holds. For $G=\prod_i G_i$  a product of $r\geq 2$ factors as in Standing assumptions \ref{assumptions}, and  $G_i$ corresponding to different places $p_i$ of $\bbQ$, ($p_i\leq\infty$), any irreducible lattice $\Gamma\subseteq G$ contains arbitrarily small $r$-generated finite index subgroups.}

\subsection{{Historical remarks and motivation}}
Lackenby introduced rank gradient as a generalization of Heegaard gradient, which is an invariant for 3-manifolds \cite{lackenby-rg}. Abert-Nikolov proved that if the chain $(\Gamma_n)_n$ is \emph{Farber} (e.g. if $\Gamma_n$ are contained in a chain of normal subgroups of $\Gamma$ that has trivial intersection), then the rank gradient computes the \emph{cost} of the action of $\Gamma$ on the boundary of the coset tree associated to the chain, i.e. the profinite space $\varprojlim \Gamma/\Gamma_n$ \cite{abert-nikolov-cost}. Cost is more generally defined for probability measure-preserving Borel actions of $\Gamma$, and the \emph{fixed price problem} asks whether the cost of an action only depends on $\Gamma$ and not on the action itself. A positive answer for many groups, including higher rank nonuniform irreducible lattices in real Lie groups has been given by Gaboriau \cite{gaboriau-cost}. Abert-Gelander-Nikolov have given a positive answer for right-angled lattices in such Lie groups, which includes the first uniform examples \cite{abert-gelander-nikolov}. {(Recall \cite{abert-gelander-nikolov} that a group $\Gamma$ is right-angled if it admits a finite generating set
$\{\gamma_1,\cdots, \gamma_n\}$ such that each $\gamma_i$ is non-torsion, and $\gamma_i, \gamma_{i+1}$ commute for $i = 1,\cdots, n-1$.)} Recently, Fraczyk-Mellick-Wilkens have positively answered the question for all lattices in higher rank real Lie groups \cite{fmw-cost}. In all of these results, the cost is 1, which implies vanishing of the rank gradient for any Farber sequence.  Some further results relevant here are contained in an even more recent preprint of Mellick \cite{Mellick}. It is possible that the strongest results on rank gradient that one can obtain conditionally on the Greenberg-Shalom hypothesis will soon be known by other methods.



However in the context of lattices in semisimple Lie groups, oftentimes one can find many finite index subgroups with a uniformly bounded number of generators. This is not the case for lattices in $\SL(2,\bbR)$, since those surject onto free groups, and so do some lattices in other rank 1 groups. However,by combining work of Raghunathan, Tits, and Venkataramana, one can prove that for a nonuniform higher rank arithmetic lattice $\Gamma$ in a $\bbQ$-simple real Lie group $G$, there exists $k\geq 2$ such that $\Gamma$ has arbitrarily small $k$-generated finite index subgroups  (see \cite[Remark 3]{sv-rk}). In particular, the cost of Farber chains is always 1. Sarma-Venkataramana have shown that one can take $k=3$ \cite{sv-rk}. Sarma has studied the same problem for $S$-arithmetic lattices, and in particular has proven that for an $S$-arithmetic lattice in $\SL(2,K)$, where $K$ is a number field and $|S|\geq 2$, one can take $k=3$ \cite{sarma-rk}. For $\SL(n,\bbZ), n\geq 3,$ Lubotzky has asked whether one can take $k=2$, i.e. whether $\SL(n,\bbZ), n\geq 3,$ has ``arbitrarily small 2-generator finite index subgroups'' \cite{lubotzky-dimfn}, and this was proven by Meiri \cite{meiri-rk}. The analogue of Lubotzky's question is open for all other commensurability classes of higher rank  lattices, including $S$-arithmetic ones.

\subsection{{The Greenberg-Shalom hypothesis and rank}}
We start with the following observation.
\begin{rem} Suppose that the Greenberg-Shalom Hypothesis \ref{q:shalom} holds.
	Then	for $N\geq 2$ and $\Gamma=\SL(2,\bbZ[1/N])$,
	the Lyndon-Ullman group $\Delta_{1/N}$ is a lattice in $\SL(2,\bbR)\times \prod_{p\mid N} \SL(2,\bbQ_p)$ (see Theorem \ref{thm:lukk}).
	It is therefore a finite index 2-generated subgroup of $\SL(2,\bbZ[1/N])$.  This shows that the Greenberg-Shalom Hypothesis in at least some cases implies $k=2$ even when the number of places is greater than 2.  \end{rem}

Heuristically, under the Greenberg-Shalom hypothesis, for irreducible lattices $\Gamma\subseteq G$ (where $G$ consists of at least 2 factors) one should indeed have $k=2$, and in particular, the cost of Farber chains is always 1: namely, it is plausible that  a generic pair of elements generates an irreducible group, which would have finite index by Proposition \ref{prop-irred2latt}.
The following result shows that one can indeed make such an argument (with $k$ given by the number of factors):

\begin{thm} Assume the Greenberg-Shalom hypothesis. Let $G=\prod_i G_i$ be a product of $r\geq 2$ factors as in Standing assumptions \ref{assumptions}, and assume $G_i$ correspond to different places $p_i$ of $\bbQ$, {with $p_i\leq\infty$}. Then any irreducible lattice $\Gamma\subseteq G$ contains arbitrarily small $r$-generated finite index subgroups.
	\label{thm:rk}
\end{thm}

\begin{cor}
	\label{cor:rk} Assume the Greenberg-Shalom hypothesis.
	 $\Gamma=\SL(n,\bbZ[1/p])$ contains arbitrarily small 2-generated finite index subgroups.
\end{cor}

To prove Theorem \ref{thm:rk}, we start with some preliminaries. First, we recall some facts about Jordan projections and loxodromic elements (see e.g. \cite[Chapter 6]{benoistquint-book}). For a connected simple algebraic group $H$ over a local field of characteristic 0, every nontrivial element admits a Jordan decomposition $g= g_e g_h g_u$ as a product of a commuting triple consisting of an elliptic element $g_e$, a hyperbolic element $g_h$, and a unipotent element $g_u$. An element $g\in H$ is called \emph{loxodromic} if $g_h$ is regular, i.e. conjugate into $\exp \fra^+$, where $\fra^+$ is the (open) positive Weyl chamber. Any loxodromic element $g$ is semisimple, i.e. $g_u=e$, and is therefore conjugate to an element of the form $m \exp(X)$ where $X\in\fra^+$ and $m\in Z_K(A)$ centralizes the maximal torus $A=\exp\fra$. If $H$ is real, then for any Zariski dense subgroup $\Theta\subseteq H$, the set $\Theta_{\text{lox}}$ of loxodromic elements is also Zariski dense (and, in particular, nonempty) in $H$ (see e.g. \cite[Theorem 6.36]{benoistquint-book}). If $H$ is $p$-adic, this is true under the additional assumption that every simple root of $H$ is unbounded on the set of Cartan projections of $\Theta$ (see \cite[Lemma 9.2]{benoistquint-book}). These existence results will be useful to us because, by the following lemma, a loxodromic element and a generic element generate a Zariski dense subgroup.

\begin{lemma} Let $H$ be a connected simple algebraic group over a local field of characteristic 0, and let $h\in H$ be nontrivial. Assume that either
	\begin{enumerate}[(i)]
		\item $h$ is loxodromic, or
		\item $H$ is nonarchimedean and $h$ belongs to a compact, open, torsion-free subgroup that is contained in the image of the exponential map.
	\end{enumerate}
 Then there exists a nonempty Zariski open (and in particular, open and dense) set $S\subseteq H$ such that for $g\in S$, the group $\langle g,h\rangle$ is Zariski dense in $H$.
\label{lemma:Z-generation}
\end{lemma}

We postpone the proof of Lemma~\ref{lemma:Z-generation} till after that of
Theorem~\ref{thm:rk}.
%
Henceforth we will refer to the $\bbQ_{p_i}$-Zariski topology on $G_i$ simply as the Zariski topology, and likewise for the product Zariski topology on $G=\prod_i G_i$.

To find elements in the Zariski open sets given by the above lemma, we need to establish Zariski density of suitable subsets of $\Gamma$. This is accomplished by the following lemma:

\begin{lemma} Assume the notation of Theorem \ref{thm:rk}. Fix $j\in I$ and let $U\subseteq G/G_j$ be an open neighborhood of identity. Denote by $\Gamma_j\subseteq\Gamma$ the subset consisting of elements $\gamma\in\Gamma$ whose projection mod $G_j$ lies in $U$.

Then
	\begin{enumerate}[(i)]
		\item $\pr_j(\Gamma_j)$ is Zariski dense in $G_j$, and
		\item If $G_j$ is $p$-adic for some $p$, then every simple root of $G_j$ is unbounded on the Cartan projections of $\pr_j(\Gamma_j)$.
	\end{enumerate}
\label{lemma:independence}
\end{lemma}

We also postpone the proof of Lemma~\ref{lemma:independence} till after that of
Theorem~\ref{thm:rk}.
With these preliminary facts stated, we can start the proof of Theorem \ref{thm:rk} proper:

\begin{proof}[Proof of Theorem \ref{thm:rk}] Index the factors by a set of primes and possibly $\infty$. If there is no archimedean factor, relabel one of the finite primes (formally) to be $\infty$. Then for $p\neq \infty$, let $K_p\subseteq G_p$ be a compact open subgroup contained in the image of the exponential map on $G_p$ and set $K:=\prod_{p\neq \infty} K_p$. For any $p$, let $q_p:G\to G/G_p\cong\prod_{\ell\neq p} G_\ell$ be the quotient mod $G_p$, and define the subgroup $\Gamma_K:=\Gamma\cap q_\infty^{-1}(K)$ consisting of those elements of $\Gamma$ whose projection mod $G_\infty$ lies in $K$. By Lemma \ref{lemma:independence}, $\pr_\infty(\Gamma_K)$ is Zariski dense in $G_\infty$, and hence contains a loxodromic element. Let $a\in\Gamma_K$ be such that $\pr_\infty(a)$ is loxodromic.

We will now show that for every $p\neq \infty$, there exists $b_p\in\Gamma$ such that
	\begin{enumerate}[(i)]
		\item $\langle a,b_p\rangle$ is indiscrete mod $G_p$, and
		\item $\langle a,b_p\rangle$ has Zariski dense projections to $G_\infty$ and to $G_p$.
	\end{enumerate}
Assuming existence of such $b_p$, it is easy to see $\langle a,b_p\mid p\neq \infty\rangle$ is irreducible in $G$. Indeed, the projections mod $G_p$ are indiscrete for $p\neq \infty$ by Property (i). The projection mod $G_\infty$ is indiscrete because $q_\infty(a)$ belongs to the compact torsion-free subgroup $K$. Finally, projections to any factor are Zariski dense by Property (ii).

So it remains to establish existence of elements $b_p\in\Gamma$ with the above Properties (i) and (ii). By Lemma \ref{lemma:Z-generation} above, for every factor $G_\ell$ (including $G_\infty$) there exists an open dense set $S_\ell \subseteq G_\ell$ such that for any $g_\ell \in S_\ell$, the group $\langle g_\ell, \pr_\ell(a)\rangle$ is Zariski dense in $G_\ell$. For every $p\neq \infty$, the set $\prod_{\ell \neq p} S_\ell\subseteq \prod_{\ell\neq p} G_\ell$ is open and dense, so we can choose $g_p\in \prod_{\ell\neq p} S_\ell$ sufficiently close to identity so that iterated commutators of $g_p$ and $q_p(a)$ converge to the identity in $G/G_p$. Since the subsets $S_\ell\subseteq G_\ell, \ell\neq p,$ are open, there is an open neighborhood $V_p$ of $g_p \in \prod_{\ell\neq p} G_\ell$ such that these properties hold for any element of $V_p$. To summarize, we have that for any $g \in V_p$, the group $\langle g,q_p(a)\rangle$ is indiscrete in $G/G_p$, and has Zariski dense projection to $G_\ell$ for all $\ell\neq p$.

Since $\Gamma$ has dense projection to $\prod_{\ell\neq p} G_\ell$, we can choose  $\gamma_p\in\Gamma$ such that $q_p(\gamma_p)\in V_p$. Further choose $W_p\ni e$ an open neighborhood of the identity in $G/G_p$ such that $q_p(\gamma_p) W_p\subseteq V_p$. Let $\Gamma_p:=\Gamma \cap q_p^{-1}(W_p)$ consist of those elements of $\Gamma$ whose projection mod $G_p$ lies in $W_p$.

We now consider elements $b_p$ of the form $\gamma_p \eta_p$, where $\eta_p\in \Gamma_p$. For such elements, the projection to $G_\infty$ lies in $S_\infty$, so $\langle a,b_p\rangle$ has Zariski dense projection to $G_\infty$. Further, iterated commutators of $q_p(b_p)$ and $q_p(a)$ converge to the identity in $G/G_p$, and they cannot terminate since then $\langle a,b_p\rangle$ would be nilpotent and in particular not Zariski dense in $G_\infty$. This establishes both claimed Properties (i) and (ii) of $b_p$ except the Zariski density of the projection to $G_p$, so we will now show we can choose $b_p$ to guarantee this as well.

This last property exactly means that we require $\pr_p(b_p)\in S_p$. On the other hand, the possible choices of $\pr_p(b_p)$ are among $\pr_p(\gamma_p)\pr_p(W_p)$. This latter set is Zariski dense by Lemma \ref{lemma:independence} (applied with $U=W_p$), and since $S_p$ is nonempty and Zariski open, we have $S_p\cap \pr_p(\gamma_p) \pr_p(W_p)\neq \varnothing$, so that there exists a satisfactory choice of $b_p$. \end{proof}

{Corollary~\ref{cor:rk} immediately follows from the proof of Theorem~\ref{thm:rk} above.}\\

\subsection{{Proofs of Lemma~\ref{lemma:Z-generation}
and  Lemma~\ref{lemma:independence}}}
{We now furnish the postponed proofs of Lemma~\ref{lemma:Z-generation}
and  Lemma~\ref{lemma:independence}.}
\begin{proof}[Proof of Lemma~\ref{lemma:Z-generation}:] The idea is to consider the set of elements $g$ such that $\Ad(g)$ and $\Ad(h)$ have no joint invariant subspace in the Lie algebra $\frh$ of $H$. Then for such $g$, as long as the Lie algebra of $\overline{\langle g,h\rangle}^Z$ is nontrivial, it must be all of $\frh$, and this will imply $\langle g,h\rangle$ is Zariski dense. However, we will actually work with a smaller set of $g$ that we can prove is Zariski open.
	
	To define this set, for $g\in H$ and $0<d<\dim\frh$, let $I_g(d)$ denote the collection of $d$-dimensional $\Ad(g)$-invariant subspaces of $\frh$, and let $I_g:=\cup_d I_g(d)$ denote the collection of all proper $\Ad(g)$-invariant subspaces of $\frh$. Note that $I_g(d)$ is a Zariski closed subset of the Grassmannian $\Gr_d(\frh)$. Let $S_d$ be the set of all elements $g\in H$ such that $\Ad(g)I_h(d)\cap I_h(d)=\varnothing$, so the complement of $S_d$ consists of those $g\in H$ such that there exists $x\in I_h(d)$ with $gx\in I_h(d)$. Note also that, as promised, we indeed have $I_g(d)\cap I_h(d)=\varnothing$ for all $g\in S_d$.
	
	Let
	$$\pi_H: H\times \Gr_d(\frh)\times \Gr_d(\frh)\to H$$
	be the projection onto the first factor. In terms of this map, we have
	$$S_d^c = \pi_H(\{(g,x,gx)\in H\times \Gr_d(\frh)\times \Gr_d(\frh)\mid x, gx\in I_h(d)\}).$$
	Let $\alpha:H\times \Gr_d(\frh)\to \Gr_d(\frh)$ denote the action map $\alpha(g,x):=gx$. Then we have
	$$S_d^c = \pi_H(\text{Graph}(\alpha)\cap (H\times I_d(h)\times I_d(h))).$$
	Using this description, we will show that $S_d^c$ is Zariski closed. A projection $X\times Y\to X$ of varieties is a closed map (with respect to the Zariski topology on $X\times Y$) if $Y$ is a projective variety, so $\pi_H$ is a closed map. Of course $H\times I_d(h)\times I_d(h)$ is Zariski closed, so it remains to show Graph$(\alpha)$ is Zariski closed. This is a special case of the general fact that the graph of a surjective algebraic map $F:X\to Y$ to a projective variety $Y$ is Zariski closed in $X\times Y$. We have shown $S_d$ is Zariski open, and therefore so is $S=\cap_d S_d$.
	
	Next, we show that $S\neq \varnothing$. For this, we use the following property of $h$ that holds under the assumption that either $h$ is loxodromic or sufficiently close to identity:  there exists $X\in\frh$ such that $I_h$ is contained in the collection $I_X$ of $\ad(X)$-invariant proper subspaces of $\frh$. We prove this by considering the cases that (i) $h$ is loxodromic, and (ii) $h$ is contained in a compact open torsion-free subgroup.
	
	In Case (i), $h$ is conjugate by some $c\in H$ to an element of the form $m \exp(X)$, where $m$ is elliptic and centralizes $A$, and $X\in\fra^+$ is regular. By inspecting the action of $\Ad(m)$ and $\Ad(\exp X)$ relative to the root space decomposition of $\frh$, we see that $I_{m\exp(X)}\subseteq I_{\exp X}$ and also that $I_{\exp X} = I_X$. Therefore $I_h \subseteq  I_{c^{-1} \exp(X) c}$. We can write $c^{-1} \exp(X) c = \exp(\Ad(c)^{-1} X)$, and therefore $I_h \subseteq I_{\Ad(c)^{-1}X}$. This completes Case (i).
	
	In Case (ii), we can write $h=\exp(X)$. Then of course any $\ad(X)$-invariant subspace is $\Ad(h)$-invariant. For the converse, note that if $H$ is $p$-adic, we have $h^{p^n}=\exp(p^n X)\overset{n\to\infty}{\longrightarrow} \Id$, and
	$$\ad(X) v = \lim_{n\to \infty} \frac{1}{p^n}\left(\Ad(\exp(p^n X))-\Id\right) v.$$
	Now let $V$ be an $\Ad(h)$-invariant subspace. Then $V$ is also $\Ad(h^{p^n})$-invariant for any $n$, and by the above formula, is also $\ad(X)$-invariant.
	
	Finally, by a theorem of Bois \cite{bois-15}, the Lie algebra $\frh$ is 1.5-generated, i.e. for every $0\neq A\in\frh$ there exists $B\in\frh$ such that $\{A,B\}$ generates $\frh$ as a Lie algebra. Further, the set of $B$ with this property is Zariski open (see \cite[Proposition 1.1.3]{bois-15}). Choose $Y$ such that $\{X,Y\}$ generates $\frh$, so that $I_X\cap I_Y=\varnothing$. Since the set of possible choices of $Y$ is Zariski open, we can choose such $Y$ that is loxodromic, i.e. $Y=E+A$ where $A$ is regular and $E$ is elliptic centralizing the maximal torus containing $A$. As we commented before, for such elements we have $I_{\exp Y} \subseteq I_A$. Since $\exp(Y)$ is loxodromic, $\Ad\exp Y$ contracts towards $I_Y$, so that after possibly replacing $Y$ by a large scalar multiple, $\Ad(\exp Y)I_X$ is contained in a neighborhood of $I_Y$ disjoint from $I_X$, so $\exp Y\in S$.
	
	
	
	It remains to show that for $g\in S$, the group $\langle g,h\rangle\subseteq H$ is Zariski dense. Let $g\in S$ and note that $\langle g,h\rangle$ is infinite (because $h$ has infinite order), and hence its Zariski closure has nontrivial Lie algebra, which is of course $\langle \Ad(g),\Ad(h)\rangle$-invariant. It follows that the Lie algebra of $\overline{\langle g,h\rangle}^Z$ is all of $\frh$, i.e. $\overline{\langle g,h\rangle}^Z$ is open. However, any proper Zariski closed subgroup is nowhere dense, so we must have $\overline{\langle g,h\rangle}^Z=H$. \end{proof}

	\begin{proof}[Proof of Lemma~\ref{lemma:independence}:] $\mbox{}$\\ Proof of (i): As $U$ decreases, so does the Zariski closure of $\pr_j(\Gamma_j)$ in $G_j$. By the Noetherian property of $G_j$ {(Lemma~\ref{lem-noeth})}, these Zariski closures eventually stabilize, so we can assume stabilization has occurred at $U$, and denote the corresponding Zariski closure by $H_j\subseteq G_j$. Then choosing $V\subseteq U$ symmetric such that $V^2\subseteq U$, it is easy to see that $H_j^2\subseteq H_j$ and $H_j^{-1}=H_j$, i.e. $H_j$ is a group. Further, if $\gamma\in\Gamma$ is fixed, we can choose $V_\gamma\subseteq U$ such that $\gamma V_\gamma \gamma^{-1}\subseteq U$. It follows that $H_j$ is normalized by $\pr_j(\gamma)$. Since $\gamma\in\Gamma$ was arbitrary, we conclude that $H_j$ is normalized by $\pr_j(\Gamma)$, which is Zariski dense in $G_j$. Since $H_j$ is Zariski closed, this implies that $H_j$ is normal in $G_j$, and since $H_j$ is infinite, we must have $G_j^+\subseteq H_j$. Finally, since $G_j^+$ is Zariski dense (see Theorem \ref{thm:G+}.(i)), we conclude that so is $H_j$.
		
		Proof of (ii): Let $\kappa_j(\Gamma_j)$ be the set of Cartan projections of $\pr_j(\Gamma_j)$, and let $\alpha$ be any simple root of $G_j$. We need to show that $\alpha$ is unbounded on $\kappa_j(\Gamma_j)$. First note that $\pr_j(\Gamma_j)$ is unbounded (since any sequence of elements whose projections converge to the identity in $G/G_j$ must diverge in $G_j$). Therefore $\kappa_j(\Gamma_j)$ is unbounded as well, and since the Weyl group orbit of $\alpha$ spans the dual $\fra^\ast$, there exists a Weyl group element $w\in W$ such that $w\alpha$ is unbounded on $\kappa_j(\Gamma_j)$.
		
		By a result of Tits \cite{tits-weylsubgroup}, there exists a finite extension $\widetilde{W}$ of $W$ contained in $G$ such that if $\widetilde{w}\in \widetilde{W}$ has image $w\in W$, then $\Ad(\widetilde{w})$ coincides with the Weyl group action of $w$ on $\fra$. After possibly conjugating by an element of $A$, we can assume $\widetilde{W}$ is contained in the maximal compact subgroup $K_j$ of $G_j$. Since $\Gamma$ has dense projection to $G_j$ and $K_j$ is open, we can choose $\gamma\in\Gamma$ such that $\pr_j(\gamma)\in \widetilde{w}^{-1} K_j$, say $\pr_j(\gamma) = \widetilde{w}^{-1} k$ for some $k\in K_j$. Now choose $\gamma_n\in\Gamma_j$ converging to the identity in $G_j$ and such that $(w\alpha)(\kappa_j(\gamma_n))\to\infty$. For $n\gg 1$, we have $q_j(\gamma\gamma_n\gamma^{-1})\in U$, so $\gamma \gamma_n \gamma^{-1} \in \Gamma_j$. Let
		$$\pr_j(\gamma_n) = l_n a_n l_n'$$
		be the Cartan decomposition of $\pr_j(\gamma_n)$. Then we simply compute
		$$\pr_j(\gamma \gamma_n \gamma^{-1}) = \widetilde{w}^{-1} k l_n a_n l_n' k^{-1} \widetilde{w}.$$
		Using that $\widetilde{W}\subseteq K_j$, we see that the Cartan projection of $\pr_j(\gamma \gamma_n \gamma^{-1})$ is given by
		$$\kappa_j(\gamma \gamma_n \gamma^{-1}) = \widetilde{w}^{-1} a_n \widetilde{w}.$$
		and
		$$\alpha(\widetilde{w}^{-1} a_n \widetilde{w}) = (w\alpha)(\gamma_n) \to \infty.$$
	\end{proof}
	
%
%


\subsection{{Application and question}}
\begin{question} Let $G$ and $\Gamma$ be as in Theorem~\ref{thm:rk}, and assume $r>2$. Does $\Gamma$ even contain a single $k$-generated finite index subgroup for some $k<r$? Does $\Gamma$ admit arbitrarily small $k$-generated subgroups for some $k<r$? E.g. for $k=3$ as in the results of Sarma-Venkataramana and Sarma mentioned above? Or even $k=2$ as in Lubotzky's question and Meiri's theorem? \end{question}

An elementary argument shows that if a chain of finite index subgroups $(\Gamma_n)_n$ admits a refinement with uniformly bounded rank (in fact $o([\Gamma:\Gamma_n])$ suffices), then $(\Gamma_n)_n$ has vanishing rank gradient. We include it for completeness:
\begin{prop} Let $\Gamma$ be a finitely generated group. {Let $(\Gamma_n)_n$
	be a decreasing chain of finite index normal subgroups.}
Let $(\Gamma_n')$ be a decreasing chain of finite index subgroups such that $\Gamma_n'\subseteq\Gamma_n$ for all $n$, and $\rk(\Gamma_n')\leq r$ is uniformly bounded.
	Then the rank gradient of $(\Gamma_n)_n$ vanishes.\end{prop}
\begin{proof} Let $\varepsilon>0$ and let $m \geq n \geq 1$ be sufficiently large (we will specify how large in a moment). First we bound the rank of $\Gamma_n'\cap \Gamma_m$ using the trivial bound as a finite index subgroup of $\Gamma_n'$ (see Equation \eqref{eq:rk-trivialupper}), and we find
	\begin{align*}
		\rk(\Gamma_n'\cap \Gamma_m)-1	&\leq 	(r-1) [\Gamma_n':\Gamma_n'\cap \Gamma_m] 	\\
		&\leq (r-1) [\Gamma_n:\Gamma_m],						\\
		&\leq  \frac{ (r-1)}{[\Gamma:\Gamma_n]} [\Gamma:\Gamma_m],
	\end{align*}
	where for the second estimate we used that $m\geq n$ so that $\Gamma_m\subseteq\Gamma_n$ and $\Gamma_n'/(\Gamma_n'\cap\Gamma_m)\hookrightarrow \Gamma_n/\Gamma_m$. In particular, by choosing $n\geq 1$ such that $(r-1)[\Gamma:\Gamma_n]^{-1}<\varepsilon$, we see $\Gamma_n'\cap\Gamma_m$ is generated by at most $\varepsilon[\Gamma:\Gamma_m]$ many elements. Now we need to show that by not adding too many generators, we can actually generate all of $\Gamma_m$. For this we simply use the trivial bound:
	$$\rk(\Gamma_m)\leq [\Gamma_m:\Gamma_n'\cap\Gamma_m] + \rk(\Gamma_n'\cap\Gamma_m).$$
	Since $\Gamma_m/(\Gamma_n'\cap\Gamma_m)\hookrightarrow \Gamma/\Gamma_n'$, we can estimate the first term by $[\Gamma:\Gamma_n']$. So far we have only used $m\geq n$, but now take $m$ sufficiently large so that $[\Gamma:\Gamma_m]>\varepsilon^{-1} [\Gamma:\Gamma_n']$. Then we have $\rk(\Gamma_m) \leq 2\varepsilon [\Gamma:\Gamma_m]$, as desired.\end{proof}

Therefore we have the following application of Theorem \ref{thm:rk}:
\begin{cor} Assume the Greenberg-Shalom hypothesis, and let $G$ be as in Standing Assumptions \ref{assumptions} with simple factors corresponding to different places of $\bbQ$, and $\Gamma\subseteq G$ an irreducible lattice. Then the rank gradient of $\Gamma$ with respect to any infinite chain of finite index normal subgroups vanishes.
	\label{cor:rg}\end{cor}
As mentioned earlier, Fraczyk-Mellick-Wilkens have proven this result unconditionally for higher rank lattices in real Lie groups and Mellick has extended this to the case of at least some products with rank one factors. It seems plausible (but we do not know) that their methods extend to the setting of lattices as in the corollary as well.  Those techniques do not seem capable of addressing the more refined question of arbitrarily small $k$-generated finite index subgroups.


\appendix
\section{Background on algebraic groups, irreducibility and Venkataramana's Lemma}

We collect in this appendix a variety of results and proofs all of which depend heavily
on the theory of algebraic groups.  We hope this makes the main body of the paper more
readable for audiences not familiar with these ideas.

We begin with a pair of important facts that drive our entire exploration.  One
is classical, the other elementary.

\begin{prop}\label{compactopencomm}
Every totally disconnected locally compact group $G$ contains a compact open subgroup $K$.
Furthermore $G$ commensurates $K$.
\end{prop}

\begin{proof}
The first statement is van Dantzig's theorem see e.g. \cite[Theorem 16]{Pon}.  The second statement can be proven by covering any
conjugate of $K$ by finitely many $K$ cosets, using that $K$ is compact and open.
\end{proof}

\subsection{Preliminaries on simple algebraic groups} \label{subsec:prelim}
We will now review classical results in the study of simple algebraic groups  used in the main body of the paper, and end by proving that under Standing Assumptions \ref{assumptions}, the projections of $\Gamma$ are Zariski dense. We start by fixing some notation:

\begin{assume} For the rest of this section, $k$ denotes a local field of characteristic zero, and $\bbH$ denotes a connected semisimple adjoint algebraic group defined over $k$ such that $\bbH(k)$ does not have compact factors.\end{assume}

We write $H:=\bbH(k)$. Let us start with the following useful definition:
\begin{dfn} $H^+$ denotes the group generated by unipotent elements of $H$.\end{dfn}
It is not necessary to assume $k$ is a local field of characteristic zero, but the definition of $H^+$ is more difficult in general. To simplify the discussion and since we only need this notion for local fields of characteristic zero, we will only give this definition (but see e.g. \cite[Section I.1.5]{margulis-book}). We have the following result due to Borel-Tits:

\begin{thm}[{Borel-Tits \cite[6.7, 6.9, and 6.14]{borel-tits-morphismes-abstraits}}] \label{thm:G+} \mbox{}
	\begin{enumerate}[(i)]
		\item $H^+\subseteq H$ is Zariski dense and (analytically) open. In particular $H^+$ has finite index in $H$.
		\item $H^+$ does not contain any proper subgroup of finite index. Hence every finite index subgroup of $H$ contains $H^+$.
	\end{enumerate}\end{thm}
If $k=\bbR$, then $H^+$ is the connected component of $H$ that contains the identity (see again \cite[6.14]{borel-tits-morphismes-abstraits}). In particular, any open subgroup of $H$ contains $H^+$. A nonarchimedean analogue is the following unpublished result of Tits, with a published proof due to Prasad:
\begin{thm}[{Tits-Prasad \cite{prasad-tits}}] \label{thm:tits-prasad}
Assume in addition that $\bbH$ is almost simple. Then any noncompact, open subgroup of $H$ contains $H^+$.\end{thm}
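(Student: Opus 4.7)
Since the archimedean case is already subsumed by Theorem \ref{thm:G+}, the plan is to handle the nonarchimedean case by working with the Bruhat-Tits building $X$ of $H$, a maximal $k$-split torus $S \subseteq \bbH$ with $A := S(k)$, the relative root system $\Phi = \Phi(\bbH, S)$, and the root subgroups $U_\alpha \subseteq \bbH$. By definition $H^+$ is generated by the groups $U_\alpha(k)$, and stabilizers in $H$ of bounded subsets of $X$ are compact parahoric subgroups; these two facts are the structural inputs I would rely on. The first reduction is to replace $U$ by $U \cap H^+$, which is open and of finite index in $U$ (hence still noncompact) by Theorem \ref{thm:G+}. It therefore suffices to show that any noncompact open subgroup of $H^+$ is all of $H^+$, and I henceforth assume $U \subseteq H^+$.

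Since $U$ is open it contains a parahoric subgroup $P_\Omega$; I would choose $\Omega$ to be a special vertex so that $P_\Omega$ is maximal and contains representatives of the relative Weyl group $W = N_H(S)/Z_H(S)$ modulo $A$. Because $U$ is noncompact while every facet stabilizer is compact, the orbit $U \cdot \Omega$ is unbounded in $X$, so $U$ contains an element acting on $X$ with positive translation length. Combining this with the Cartan decomposition $H = P_\Omega \, A \, P_\Omega$ and replacing the element by a power and a $P_\Omega$-conjugate, I would extract $a \in U \cap A$ that translates the standard apartment nontrivially. The central step is then to exploit the dynamics of $a$ on the root groups via the Bruhat-Tits filtration $\{U_{\alpha, r}\}_{r \in \R}$ of $U_\alpha(k)$, which satisfies $a U_{\alpha, r} a^{-1} = U_{\alpha, r + \alpha(\nu(a))}$ for the valuation pairing. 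Because $a$ translates the apartment nontrivially, some relative root $\alpha$ has $\alpha(\nu(a)) \neq 0$; conjugating the nontrivial open subgroup $P_\Omega \cap U_\alpha(k) \supseteq U_{\alpha, r_0}$ by powers of $a$ or $a^{-1}$ pushes $r_0$ to $-\infty$ and exhausts all of $U_\alpha(k)$, so $U_\alpha(k) \subseteq U$.

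To finish, almost simplicity of $\bbH$ makes $\Phi$ irreducible, so $W$ acts with at most two orbits on $\Phi$ (long and short roots); conjugating $U_\alpha(k)$ by the Weyl representatives inside $P_\Omega \subseteq U$, and, if necessary, repeating the torus-extraction step to bridge long and short roots, I would obtain $U_\beta(k) \subseteq U$ for every $\beta \in \Phi$, and hence $U \supseteq \langle U_\beta(k) \colon \beta \in \Phi \rangle = H^+$. The main obstacle is the middle step: extracting a genuinely noncompact element of a split torus out of an arbitrary noncompact open subgroup, and then using it to dilate a single open piece of a root group all the way up to $U_\alpha(k)$. This is exactly where Prasad's argument for Tits' unpublished result does its work; the relative rank one case deserves separate attention, since there the building is a tree and the non-ellipticity of a single element of $U$ already produces the needed unipotents directly, bypassing the affine root group filtration machinery.
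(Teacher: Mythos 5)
The paper does not supply a proof of this theorem: it is cited from Prasad's note proving Tits' unpublished result, so there is no internal argument to compare against. I'll therefore assess your sketch on its own terms. The overall strategy — reduce to $H^+$, work on the Bruhat--Tits building, extract an element of the maximal split torus with nontrivial translation, use the conjugation formula $a U_{\alpha,r}a^{-1} = U_{\alpha, r+\alpha(\nu(a))}$ to exhaust a root group, and then propagate via the Weyl group — is a legitimate blueprint, and the later steps (including the bridging of long and short root orbits, which you rightly flag as needing care but which can be handled using the coroot $\alpha^\vee(k^\times)\subseteq A$ once one root group is in hand) are fixable with standard arguments.

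However, there is a genuine gap at the first real step: \emph{it is not true that an open subgroup $U$ of $H$ (or $H^+$) must contain a parahoric subgroup $P_\Omega$.} Openness only guarantees that $U$ contains \emph{some} compact open subgroup, e.g.\ a deep Moy--Prasad or congruence filtration subgroup $P_{\Omega,r}$ with $r$ large. Already in $\SL_2(\bbQ_p)$ the principal congruence subgroup $K_1$ is open but contained in no parahoric other than $\SL_2(\bbZ_p)$, which it does not equal. Your argument uses $P_\Omega\subseteq U$ twice in an essential way: first to conclude that the Cartan-decomposition factors $k_1,k_2$ of a noncompact element $g\in U$ lie in $U$, hence that $a=k_1^{-1}gk_2^{-1}\in U\cap A$; and second to conjugate root groups by Weyl representatives inside $U$. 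Without $P_\Omega\subseteq U$ both of these collapse. The assertion ``noncompact open $\Rightarrow$ contains a parahoric'' is in fact \emph{equivalent} to the theorem you are trying to prove (since once $U=H^+$ it trivially contains every parahoric), so invoking it is circular. The actual content of Prasad's proof is precisely how to bootstrap from a small filtration subgroup $P_{\Omega,r}\subseteq U$ together with noncompactness of $U$ up to a nontrivial torus element of $U$; this is not automatic, and your sketch elides it. To repair the argument you would need a substitute for the Cartan step — for instance, a careful analysis of how $V$ and a far conjugate $gVg^{-1}$ interact on the building, or an argument ruling out that $U$ fixes a point at infinity (to get a hyperbolic element) followed by a Jordan-decomposition argument to land inside $A$ — and neither of these is supplied.
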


\noindent { In view of the above results, a subgroup $\Delta<H$ is almost dense if and only if its closure contains $H^+$. Since the analytic closure is contained in the algebraic closure and $H^+$ is Zariski dense, it follows that any almost dense group is Zariski dense.}

{The following general and basic fact will be used a number of times in the sequel. We therefore separate it out for easy reference.}

{ 
	\begin{lemma}\label{lem-noeth}
	Let $G$ be an algebraic group. Let $\Gamma_1 \supset \Gamma_2
	\supset \cdots $ be a descending sequence of (possibly discrete) subgroups of $G$. Let ${\overline{\Gamma_i}}^Z$ denote the Zariski closure of $\Gamma_i$ in $G$. Then, there exists $N \in \natls$ such that
	${\overline{\Gamma_i}}^Z={\overline{\Gamma_j}}^Z$ for all $i, j \geq N$.
	\end{lemma}
	
	\begin{proof}
	Since $\Gamma_1 \supset \Gamma_2
	\supset \cdots $, it follows that ${\overline{\Gamma_i}}^Z \supseteq {\overline{\Gamma_{i+1}}}^Z$ for all $i \geq 1$. Since each ${\overline{\Gamma_i}}^Z $ is Zariski closed, the Lemma follows from the
	Noetherian property for Zariski closed subsets of $G$.
	\end{proof}
}

Now, let us return to the situation of groups with almost dense commensurators and prove the following straightforward lemma:
\begin{lemma} \label{lemma:zdense-comm} Suppose $\Theta\subseteq H$ is an infinite subgroup with Zariski dense commensurator $\Delta$. Then, possibly after passing to a finite index subgroup,  there is a subset of factors $H_J\subseteq H$ containing $\Theta$ and $\Theta\subseteq H_J$ is Zariski dense.\end{lemma}

\begin{rem} In the above lemma, we do not assume that $\Theta$ is a discrete subgroup of $H$.\end{rem}

\begin{proof} Consider the Zariski closures of finite index subgroups of $\Theta$, partially ordered by inclusion. We say $\Theta'\subseteq\Theta$ is Zariski dense in $\Theta$ if $\overline{\Theta'}^Z=\overline{\Theta}^Z$.  By (Lemma~\ref{lem-noeth}), there is a finite index subgroup $\Theta'$ of $\Theta$ such that any finite index subgroup $\Theta''\subseteq \Theta'$ is Zariski dense in $\Theta'$. Let $L$ be the Zariski closure of $\Theta'$ in $H$.

We claim that $\Delta$ normalizes $L$: Indeed, for any $\delta\in\Delta$, there is some finite index subgroup $\Theta'_\delta\subseteq\Theta'$ with $\delta\Theta_\delta'\delta^{-1} \subseteq \Theta'$. Since both $\Theta'_\delta$ and $\delta\Theta'_\delta\delta^{-1}$ are Zariski dense in $\Theta'$, it follows that $\delta$ normalizes $L$. Since $\Delta$ is Zariski dense and $L$ is Zariski closed, it follows that $H$ normalizes $L$. Since $H$ is adjoint, we have $L=H_J$ for some collection of factors $J$.

Finally, we conclude that $\Theta$ is contained in $H_J$: Indeed, the image of $\Theta$ in the remaining factors $H_{J^c}$ is finite and normalized by the projection $\pr_{J^c}(\Delta)$. A finite index subgroup $\Delta'$ of $\Delta$ has projection to $H_{J^c}$ that centralizes the image of $\Theta$ and is still Zariski dense, so that the image of $\Theta$ is central in $H_{J^c}$. Finally, since $H$ is adjoint, $H_{J^c}$ is centerless.\end{proof}

\begin{rem} The proof does not use the analytic topology on $k$, so the statement is still true if $k$ is a number field. \label{rem:nrfield} \end{rem}

%
{
\begin{lemma}\label{rmk:gamma-zdense} If $G$ is as in Standing Assumptions \ref{assumptions} and $\Gamma$ is a discrete subgroup with unbounded projection to each factor and almost dense commensurator, then for every $i\in I$, the projection $\pr_i(\Gamma)\subseteq G_i$ is Zariski dense. \end{lemma}

\begin{proof}

By assumption $\pr_i(\Gamma)$ has almost dense (and hence Zariski dense) commensurator. Since $\pr_i(\Gamma)$ is unbounded, it is infinite, so Lemma \ref{lemma:zdense-comm} implies it is Zariski dense.
\end{proof}

We end this subsection by giving the general proof of our key lemma relating irreducibility and commensurators.

\begin{proof}[Proof of Lemma \ref{lem:intersectionprojection}]
Since $G_{J^c} \times K$ is commensurated by $G$, we see that $\Gamma_K$ is commensurated by $\Gamma$.
Since $\Gamma$ is discrete in $G$, we see that $\Gamma_K$ is discrete in $G_{J^c} \times K$ and
since $K$ is compact $\pi_{J^c}(\Gamma_K)$ is discrete in $G_{J^c}$.  Let $I \subset J^c$ be
any subset of indices.  Assuming $\pi_{I}(\Gamma_K)$ is discrete, we will show that this implies
$\pi_{I\times J}(\Gamma)$ is discrete, contradicting irreducibility.    Take a sequence
$\gamma_i \in \Gamma$ such that $\pi_{I \times J}(\gamma_i) \rightarrow 1$.  Then
it is also true that $\pr_J(\gamma_i) \rightarrow 1$ or 
that $\gamma_i$ is eventually in $\Gamma_K$.  Since we assume $\Gamma_K$ is discrete in $G_{I \times J}$
this implies that $\gamma_i$ is eventually the identity, the desired contradiction to irreducibility.

Now we consider the Zariski closure $L$ of $\pr_{J^c}(\Gamma_K)$.  Since $\Gamma$ has dense projection
in $G_J$ and commensurates $\Gamma_K$, we see $L$ is normalized by $G_J$.  By Lemma \ref{lemma:zdense-comm},
 $L$ is a product of factors of $G_J$.  Thus there is an isomorphic projection of $L$ onto some subcollection of factors $G_I$ of $G_J$.
Since $\Gamma_K$ is discrete and contained in its own Zariski closure, this implies the projection
of $\Gamma_K$ to $G_I$ is discrete.  As above this is a contradiction unless $I=J$, in which case
$\Gamma_K$ is Zariski dense.
\end{proof}



\subsection{Further theory of $p$-adic Lie groups.}\label{subsec:padic}

We will now discuss some standard material on $p$-adic Lie groups that we need. A good reference containing this material is \cite{serre-liebook}. For $p$ a prime, the field $\bbQ_p$ of $p$-adic numbers is the completion of $\bbQ$ with respect to the norm $\|x p^k\|_p := p^{-k}$ where $k\in\bbZ$ and $x\in\bbQ$ has numerator and denominator coprime to $p$. $\bbQ_p$ is a locally compact field of characteristic zero.

A $p$-adic analytic manifold of dimension $n$ is a topological space with an atlas of charts valued in $\bbQ_p^n$, such that transition functions are analytic. It is a $p$-adic Lie group if group multiplication and inversion are analytic maps. In particular $\bbQ_p^n$ (as well as its open subgroup $\bbZ_p^n$) and $\GL(n,\bbQ_p)$ are $p$-adic analytic groups.  Several pertinent results behave as for real Lie groups:

\begin{thm}[Cartan's subgroup theorem {\cite[Corollary, p. 155]{serre-liebook}}]
Any closed subgroup of a $p$-adic Lie group is itself a $p$-adic Lie group.
\end{thm}

In particular, the $\bbQ_p$-points of an algebraic group, which is a closed subgroup of $\GL(N,\bbQ_p)$ for some $N$, forms a $p$-adic Lie group.

Further, $p$-adic groups have Lie algebras.  See \cite[Section V.1]{serre-liebook} for the construction of Lie algebras for $p$-adic groups.

A key difference between $p$-adic and real Lie groups is the structure of open neighborhoods of identity: For connected real Lie groups, any open neighborhood of identity generates the entire group. However, in the $p$-adic setting, already $\bbZ_p\subseteq \bbQ_p$ is an open subgroup. Furthermore, the decreasing sequence of subgroups $p^k \bbZ_p, \, k\geq 0$, form a basis of the topology at the identity. Similarly, $\GL(n,\bbZ_p)\subseteq \GL(n,\bbQ_p)$ is a compact open subgroup, and the congruence subgroups $\GL(n,\bbZ_p)[p^k]:=\ker(\GL(n,\bbZ_p)\to \GL(n,\bbZ/p^k\bbZ))$ form a basis of compact open subgroups at the identity.

Therefore any $p$-adic linear group $G$ admits a basis at the identity consisting of compact open subgroups, namely $G\cap \GL(n,\bbZ_p)[p^k], k\geq 1$. (The assumption that the group be linear is actually superfluous, but we will not need the result in this increased generality.)

Open subgroups as above yield an especially strong form of the Lie group-Lie algebra correspondence over the $p$-adics:

\begin{thm}[{$p$-adic Lie group-Lie algebra correspondence (see \cite[Theorem 3, p. 152,  and Corollary 1, p. 156]{serre-liebook})}]
    Two $p$-adic Lie groups have isomorphic Lie algebras if and only if they contain isomorphic open subgroups.

    Every finite-dimensional $p$-adic Lie algebra is realized by a $p$-adic Lie group.
\end{thm}
Note that in \cite{serre-liebook}, the language of analytic group chunks is used, but in the case of $p$-adics, every analytic group chunk contains an open subgroup (see \cite[Corollary 1, p. 117]{serre-liebook} and also \cite[Corollary, p. 141]{serre-liebook}), yielding the above simplified formulation of the correspondence.

A last fact we will need is the metric behavior of power maps $\mu_d: x\mapsto x^d, \, d\geq 1,$ near identity:

\begin{prop} Let $G$ be a linear $p$-adic group and $d\geq 1$. Then on a sufficiently small neighborhood $U$ of identity, the map $x\mapsto x^d$ is
\begin{enumerate}[(i)]
	\item a contraction if $p\mid d$, and
	\item an isometry if $p,d$ are coprime.
\end{enumerate}
\end{prop}
In fact, as the proof will show, it suffices to take $U$ to be the first congruence subgroup. The statement is also true more generally for all $p$-adic groups, but we will not need it.
\begin{proof} Write $G\subseteq \GL(N,\bbQ_p)$. We work with the induced metric from $M(N,\bbZ_p)$ given by the maximal distance between entries. Then an element with distance $p^{-k}$ to the identity is of the form $1 + p^k A$ for some $A\in M(N,\bbZ_p)$ that is not divisible by $p$. For (i), note that
	$$(1+p^k A)^p = 1 + p^{k+1} A + O(p^{2k}),$$
which is at distance $p^{-(k+1)}$ from identity. So the $p$th power map is a contraction.

On the other hand, for (ii), we have
	$$(1+p^k A)^d = 1 + p^k d A + O(p^{2k}),$$
which remains at distance $p^{-k}$ from the identity. So $d$th power preserves distance to the identity. \end{proof}}

\subsection{Notions of irreducibility}\label{subsec:irr}
In this subsection we prove Proposition \ref{prop:irred-vs-sirred} showing that irreducibility and strong irreducibility are equivalent.
Recall that for a subset of factors $J\subsetneq I$, we write $\pr_J:G\to G_J$ for the natural projection. Our goal will be to prove that projections of irreducible groups to proper subsets of factors do not merely fail to be discrete, but are in fact almost dense:

{
\begin{prop}\label{prop:dense projections}
 Let $\Gamma\subseteq G$ be discrete and irreducible. Then for any proper subset $J\subsetneq I$, the closure $\overline{\pr_J(\Gamma)}$contains $G_J^+$.
\end{prop}

We will often use the following observation without explicit comment:
\begin{rem}\label{rmk:+-vs-restrict} For a connected semisimple algebraic group $\bbH$ defined over a field $k$ of characteristic zero, passing from $\bbH(k)$ to $\bbH(k)^+$ commutes with restriction of scalars (see \cite[Section I.1.7]{margulis-book}), so that $(R_p G)^+ = G_p^+$. \end{rem}

\begin{proof}[Proof of Proposition \ref{prop:irred-vs-sirred} from Proposition \ref{prop:dense projections}] 
Recall that we defined $\Gamma$ to be \emph{strongly irreducible} if it has 
almost dense projections. Since $G_J^+\subseteq G_J$ has finite index by Theorem \ref{thm:G+}.(i), we see that Proposition \ref{prop:dense projections} shows that any irreducible group is strongly irreducible. Conversely, since for all $p$, $G_p^+$ is $\bbQ_p$-Zariski dense in $G_p$ by Theorem \ref{thm:G+}.(i), the projection of a strongly irreducible group to $G_p$ is $\bbQ_p$-Zariski dense (and indiscrete), and therefore any strongly irreducible group is, as the name suggests, irreducible. 
\end{proof}}

We start by proving Proposition \ref{prop:irred-vs-sirred} in the `pure' case where all factors of $J$ are analytic over the same field. We will actually prove the following stronger statement that will be used in the general case.

\begin{lemma}\label{lemma:irred-pure} Let $\Gamma\subseteq G$ be discrete and irreducible. Let $p$ be a finite prime or $\infty$ and let $H\subseteq G_p$ be a closed, indiscrete subgroup normalized by $\Gamma$. Then there is a subset $I_H\subseteq I_p$ such that $G_{I_H}^+$ is an open subgroup of $H$.\end{lemma}

\begin{proof} Since $H$ is a closed subgroup of the analytic group $G_p$, its Lie algebra $\frh$ is well-defined, and since $H$ is not discrete, $\frh$ is nontrivial. Since $\frh$ is $\Ad(\pr_{I_p}(\Gamma))$-invariant and $\pr_{I_p}(\Gamma)$ is $\bbQ_p$-Zariski dense in $G_p$, it follows that $\frh$ is an ideal in the Lie algebra of $G_p$. Hence there is a nonempty subset $I_H\subseteq I_p$ such that $\frh=\oplus_{i\in I_H} \frg_i$. We will show that $H$ contains $G_{I_H}^+$.

To start, note that $H\cap G_{I_H}$ is open and closed in $G_{I_H}$. Openness follows from the fact that $H \cap G_{I_H}$ has the same Lie algebra as
$G_{I_H}$. In the archimedean case, $H$ therefore contains the connected component of identity, which coincides with $G_{I_H}^+$. In the nonarchimedean case, it suffices to prove $H\cap G_i$ is noncompact for every $i\in I_H$, since then Theorem \ref{thm:tits-prasad} shows that $H\cap G_i$ contains $G_i^+$. To establish noncompactness, note that $H\cap G_i$ is normalized by $\pr_i(\Gamma)$, which is unbounded by Lemma \ref{rmk:irred->unbdd}).  { Now notice that any bounded open subgroup $O$ has bounded normalizer  by an application of the Bruhat-Tits fixed point theorem. Openness implies that the fixed set of $O$ in the Bruhat-Tits building is itself bounded and so the normalizer fixes its barycenter.   So  $H\cap G_i$ must be unbounded as well.} \end{proof}

\begin{rem} In the pure case, the the above lemma indeed shows irreducibility implies strong irreducibility: Letting $\Gamma$ be discrete and irreducible, $J\subsetneq I$, and taking $H:=\overline{\pr_J(\Gamma)}$, the above Lemma gives a subset $I_H\subseteq J$ such that $G_{I_H}^+\subseteq H$ is open. In fact we must have $I_H=J$: Since $H/G_{I_H}^+$ is finite, if there exists $j\in J\bs I_H$, then $\pr_j(H)$ and hence $\pr_j(\Gamma)$ would be discrete, but this contradicts irreducibility of $\Gamma$. This shows $G_J^+\subseteq \overline{\pr_J(\Gamma)}$ and hence the latter has finite index in $G_J$. \end{rem}

We will complete the proof of Proposition \ref{prop:irred-vs-sirred} in the general, possibly mixed, case.

\begin{proof} Write $J=\sqcup_{p\in P_J} J_p$ where $J_p$ consists of the $\bbQ_p$-analytic factors in $J$ and $P_J$ is a finite subset of primes and possibly $\infty$. Let $H$ denote the closure of $\pr_J(\Gamma)$. By passing to a finite index subgroup of $\Gamma$, we can assume that $H\subseteq G_J^+$ and we aim to show $H=G_J^+$. First, suppose that the connected component of identity $H^\circ$ of $H$ is nontrivial (so that necessarily $J_\infty\neq\varnothing$). We apply Lemma \ref{lemma:irred-pure} to the subgroup $H^\circ$ and obtain that there is a subset $J_{H,\infty}\subseteq J_\infty$ such that $G_{J_H,\infty}^+\subseteq H^\circ$ is open. Therefore we can project to the factors given by $J\bs J_{H,\infty}$ and assume $J_{H,\infty}=\varnothing$.

We will aim to show that after the above reduction, $P_J$ consists of finite primes and $H$ contains $G_{J_p}^+$ for every $p\in P_J$. Since $H$ is locally compact and totally disconnected, it contains a compact open subgroup $K$. Because $K$ is compact and totally disconnected, its image in $G_{J_\infty}$ is finite, so that by possibly shrinking $K$, we can assume $K$ projects trivially to the archimedean factors $G_{J_\infty}$ and therefore $H$ has discrete projection. However, this is impossible because the projection of $\Gamma\subseteq H$ to $G_{J_\infty}$ is indiscrete, so we must have $J_\infty=\varnothing$.

{
It remains to show that $H$ contains $G_{J_p}^+$ for every $p\in P_J$. For any integer $d$ define a map $\mu_d:x\mapsto x^d$ of $K$ to itself.  Let $K_p$ be the projection of $K$ to $G_{J_p}$ for $p\in P_J$. By possibly shrinking $K$ further, we can assume that the map $\mu_p$ is a contraction on $K_p$ for all $p\in P_J$. Then we claim that $K=\prod_{p\in P_J} K_p$. Indeed, let $k=(k_p)_p\in K\subseteq \prod_p K_p$. Fix $p$ and let $d$ be the product of the remaining primes. Since $\mu_d$ is an isometry on $K_p$ but a contraction on $K_\ell$ for all $\ell\neq p$, there is some sequence $n_m\to\infty$ such that $k_p^{d^{n_m}}=\mu_d^{n_m}(k_p)\to k_p$ and hence $k^{d^{n_m}}\to (k_p,e,\dots, e)\in K_p \times \prod_{\ell\neq p} K_\ell$. Since $K$ is closed, we see that $K_p\subseteq K$ for all $p\in P_J$, as desired.

Further, we claim that $K_p$ is nondiscrete for all $p\in P_J$. Indeed, if the image of $K$ in $G_{J_p}$ is discrete, then so is the image of $H$. This follows since $K$ contains the intersection of $\pi_{J_p}(\Gamma)$ with an open neighborhood of the identity in $G_{J_p}$. Since we know $\Gamma$ has nondiscrete image this is impossible. Since $K_p$ is nondiscrete and $K=\prod_p K_p$ is contained in $H$, it follows that for every $p$, the intersection $H\cap G_{J_p}$ is nondiscrete. We apply Lemma \ref{lemma:irred-pure} to the subgroup $H\cap G_{J_p}$ and obtain that for every $p\in P_J$, there is a subset $J_{H,p}\subseteq J_p$ such that $G_{J_H,p}^+\subseteq H$ is open.}

Set $J_H:=\sqcup_{p\in P_J} J_{H,p}$. To complete the proof, we show $J_H=J$: We have shown above that $G_{J_H}^+$ is an open subgroup of $H$, so the image of $H$ in the complementary factors $G_{J\bs J_H}$ is discrete. This contradicts irreducibility of $\Gamma$. \end{proof}

\subsection{A lemma of Venkataramana}
\label{subsec:venky}

{ 

In this subsection we prove Lemma \ref{lem-venky} below, which we have used in the proof of Proposition \ref{prop-irred2latt}.
 The argument below is similar to the proof of a lemma of Venkataramana's first stated as \cite[Lemma 2.3]{lubotzky-zimmer} and generalized in \cite[Lemma 4.1]{f-mj-vl}. The latter applies to subgroups $\Gamma$ of $S$-arithmetic lattices containing a $T$-arithmetic lattice for $T\subseteq S$. Here we need a slightly different version where $\Gamma$ is not given as a subgroup of a lattice, so we include it for completeness. Unlike the citations above, what we need here does not requires any approximation theorems.   We shall use the following notation that is based on the proof of Proposition~\ref{prop-irred2latt}:
 \begin{enumerate}
 \item $G$ is as in Proposition~\ref{prop-irred2latt}.
 \item $\Gamma\subseteq G$ is a discrete and irreducible subgroup.
 \item $G_{\text{na}}$ is a non-empty product of nonarchimedean factors of
 $G$.
 \item $G_{0}$ is the product of the other factors of $G$ and  assume that $G_{0}$ is non-trivial.
  \item $K_{\text{na}}\subseteq G_{\text{na}}^+$ is a maximal compact open subgroup.
     \item $\pr_{na}$ denotes the projection to $G_{\text{na}}$.
    \item $\Gamma_{\text{na}}\subseteq \Gamma$ denotes the subgroup that maps into $K_{\text{na}}$ under the projection $\pr_{na}$ to $G_{\text{na}}$.
    \item $\pr_{0}$ denotes the projection to $G_{0}$.
 \end{enumerate}

 \begin{lemma}\label{lem-venky}
 Let $G, \Gamma, G_{\text{na}}, G_0, K_{\text{na}}, \Gamma_{\text{na}}, \pr_{0}$ be as above. Assume that
 	\begin{itemize}
 		\item $\pr_{na}(\Gamma_{\text{na}})$ is dense in $K_{\text{na}}$.
 	\item $\pr_{0}(\Gamma_{\text{na}})$ is a lattice in $G_0$.
 	\end{itemize}
 	Then $\Gamma\subseteq G$ is a lattice.
 	\end{lemma}
 \begin{proof}
 By passing to a finite index subgroup, we can assume without loss of generality that $\Gamma\subseteq G^+$.
 Let $F\subseteq G_0^+$ be a fundamental domain for the lattice $\pr_0(\Gamma_{\text{na}})\subseteq G_0^+$. As in Venkataramana's lemma and \cite[Lemma 4.1]{f-mj-vl}, to show $\Gamma\subseteq G$ is a lattice, we will show that $F\times K_{\text{na}}$ is a fundamental domain for $\Gamma\subseteq G^+$.

 First we show the $\Gamma$-translates of $F\times K_{\text{na}}$ are disjoint: If $\gamma(F\times K_{\text{na}})\cap (F\times K_{\text{na}})\neq \varnothing$, then by projecting to the second factor and using that $K_{\text{na}}$ is a group, we see that $\gamma\in K_{\text{na}}$, and hence $\gamma\in \Gamma_{\text{na}}$. But then by projecting to the first factor, we see that $\gamma F\cap F\neq\varnothing$, and since $F$ is a fundamental domain for $\Gamma_{\text{na}}$, we conclude that $\gamma = e$.

 Next, we show that $\Gamma(F\times K_{\text{na}})=G$. Since $K_{\text{na}}$ is $\pr_{\text{na}}(\Gamma_{\text{na}})$-invariant and $F\subseteq G_0$ is a fundamental domain for $\pr_{0}(\Gamma_{\text{na}})$, it suffices to show that $\pr_{\text{na}} (\Gamma)K_{\text{na}} = G$. But this is immediate because $\pr_{\text{na}}(\Gamma)\subseteq G_{\text{na}}^+$ is dense and $K_{\text{na}}$ is open.
 	\end{proof}
}

\bibliographystyle{alpha}
\bibliography{ref}

\end{document}